\theoremstyle{plain}
\newtheorem{theorem}{\bf Theorem}[section]
\newtheorem{lemma}[theorem]{\bf Lemma}
\newtheorem{prop}[theorem]{\bf Proposition}
\newtheorem{question}[theorem]{\bf Question}
\newtheorem{remark}[theorem]{\bf Remark}
\newtheorem{defi}[theorem]{\bf Definition}
\newcommand\cG{{\mathcal G}}
\newcommand\cP{{\mathcal P}}
\newcommand{\Z}{\mathbb{Z}}
\newcommand{\E}{\mathbb{E}}
\title{Avoiding  configurations of small size in the square grid}
\author{
	Máté  Jánosik \\
	Eötvös Loránd University,  \\
	\texttt{janosikmate6@gmail.com}
	\and
	Artúr Nádor \\
	Eötvös Loránd University,  \\
	\texttt{nador.artur@gmail.com} 
	\and
	Zoltán Lóránt    Nagy\thanks{The author is supported by the J\'anos Bolyai Scholarship of the Hungarian Academy of Sciences and by the NRDI EXCELLENCE-24 grant no. 151504 Combinatorics and Geometry.} \\
	Eötvös Loránd University,\\ 
	\texttt{zoltan.lorant.nagy@ttk.elte.hu}
	\and
	László Bence   Simon\\
	University of Cambridge, \\
	\texttt{lacisimon2005@gmail.com} 
}
\date{}
\begin{document}
	
	\maketitle
	\begin{abstract}
		We study the maximum size of a subset of the $n \times n$ integer grid that does not contain specific geometric configurations, a variation of the classical problems initiated by Erdős and Purdy. While extremal problems for 3-point patterns, such as collinear triples and right triangles, are well-studied, the landscape for 4-point configurations in the grid remains less explored. In this paper, we survey the state-of-the-art regarding forbidden 3-point and 4-point configurations, including parallelograms, trapezoids, and concyclic sets. Furthermore, we prove new lower bounds for grid subsets avoiding rhombuses and kites. Specifically, by combining the probabilistic method with the arithmetic properties of Sidon sets, we show that the maximum size of a rhombus-free subset is $\Omega(n^{4/3}(\log n)^{-1/3})$. We also provide near-quadratic lower bounds for sets avoiding kites with axis-parallel diagonals using Behrend-type constructions and discuss implications for square-free sets. These results illustrate the strong interplay between discrete geometry and additive combinatorics.
	\end{abstract}
	\section{Introduction}
	
	Erdős and Purdy  initiated the systematic study of   the 
	distribution of certain $k$-point configurations in finite point sets in the Euclidean plane \cite{EP1, EP2}, and  their work lead to an influential branch of combinatorial geometry with several  interactions with additive combinatorics, extremal set theory, random graph theory, incidence geometry and polynomial methods.
	
	In several cases, the maximum number of configurations is proved or conjectured to be in subsets of an integer grid.
	Thus, while early research focused primarily on the Euclidean plane $\mathbb{R}^2$, recent decades have seen a growing interest towards extremal problems defined on the integer grid $[n]^2 \subset \mathbb{Z}^2$, see  the excellent books of Brass, Moser and Pach \cite{research}, Matousek \cite{Mat} and of Eppstein \cite{Epp}, or similar-flavour extremal problems over vector spaces  see \cite{Ben, Io}.
	This discrete setting offers a rich interplay between geometry, additive combinatorics, and number theory. Furthermore, the rapid development of the polynomial method has provided powerful algebraic tools to address these incidence problems, often leveraging results from finite geometries \cite{GuthKatz15, Tao}.
	In this paper, 
	our central question is as follows:
	
	\begin{question}\label{main_q}
		\emph{What is the maximum number of points one can select from the $n\times n$ square grid that avoids forming certain forbidden configurations?}
	\end{question}

	While the case of $k$-point configurations with $k=2$ is closely related to the distinct distances problem, see \cite{Erdos_dist, dist_vectors, Lef, Sheffer}, the case $k=3$ (e.g., certain triangles, collinear triples) is also well-studied, the landscape for $k=4$ (quadrilaterals) remains less explored.
	Such problems have recently gained increasing attention, for instance, forbidding several types of $4$-point configurations \textit{at the same time} was highly relevant in the solution to one of Erdős' favorite problems on distinct distances by Tao \cite{Tao}, see also \cite{Dumi}.
	
	In this paper we survey some of the most studied variants of Question \ref{main_q} focusing on the cases where the forbidden configuration is of size $k=3$ or $k=4$, and prove  new results in the case of quadrilaterals,  highlighting how the symmetries of the forbidden shape and the arithmetic properties of the grid influence the extremal number. Specifically, by combining the probabilistic method with the arithmetic properties of Sidon sets and $3$-AP free sets of integers, we show lower bounds on the maximum size of rhombus-free subsets. We also provide near-quadratic lower bounds for sets avoiding kites with axis-parallel diagonals  and corollaries  for square-free sets. Finally, we prove an exact result on grid subsets avoiding non-degenerate parallelograms.\\

	\section{Three-point configurations}
	
	In this section, we collect some of the most significant results concerning $3$-point configurations.
	Let $[n]^2 := \{1, 2, \dots, n\} \times \{1, 2, \dots, n\}$ denote the $n \times n$ integer grid in the Euclidean plane. 
	
	One of the oldest and 
	most extensively studied  questions in view   is the so-called \textit{no-three-in-line problem} which was raised by Dudeney \cite{Dudeney}. We formulate its generalisation, which was introduced by Lefmann \cite{Lefmann}.
	
	\begin{defi}
		Let $f_{(k+1)-coll}(n)$ denote the maximum size of a point set chosen from the points  of  the $n\times n$ grid $\cG=[n]\times [n]$ in which at most $k$ points lie on any line.
	\end{defi}
	
	Using this notation, a 50-year-old
	cornerstone result is  due to Hall, Jackson, Sudbery, and Wild.
	
	\begin{theorem}[Hall, Jackson, Sudbery, and Wild \cite{hall1975some}]\label{hall}
		\[ 1.5n-o(n) \le f_{3-coll}(n)\le 2n.\]
	\end{theorem}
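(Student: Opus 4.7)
For the upper bound, the statement $f_{3\text{-coll}}(n)\le 2n$ is immediate from a one-line pigeonhole: the grid $\cG=[n]^2$ decomposes into $n$ disjoint horizontal lines, each of which may contain at most two points of any admissible set by definition (with $k=2$), so the total size is at most $2n$. I would record this first and then turn to the harder inequality.

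For the lower bound, my plan is to build an explicit set of size $\tfrac{3}{2}n - o(n)$ in $[n]^2$ with no three collinear, using modular algebraic curves over a prime field. First choose a prime $p\le n$ with $p=n-o(n)$ (available by the prime number theorem) and work inside the sub-grid $[p]^2\subseteq [n]^2$. The base layer is the classical modular parabola
\[
C=\{(x,\,x^2\bmod p):0\le x\le p-1\}\subset [p]^2,
\]
which has $p$ points: any Euclidean line $y=\alpha x+\beta$ meets $C$ at $(x,x^2\bmod p)$ only when $x^2\equiv \alpha x+\beta \pmod p$, a quadratic congruence with at most two roots, so $C$ has no three collinear points. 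This already supplies the weaker bound $f_{3\text{-coll}}(n)\ge n-o(n)$.

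To amplify to the target $\tfrac{3}{2}p$ points, I would augment $C$ with a second layer $C'$ of roughly $p/2$ additional points coming from a second modular curve inside $[p]^2$, for instance a scaled/shifted parabola $C'=\{(x,\,a x^2+b\bmod p):x\in I\}$ with coefficient $a\in\F_p^\times$, shift $b\in \F_p$, and $I\subset\{0,\ldots,p-1\}$ an interval of length about $p/2$. Each Euclidean line still meets $C'$ in at most two points by the same quadratic argument, so the issue is not collinearities inside a single layer but across them. The point count is then $|C\cup C'|\approx \tfrac{3}{2}p \approx \tfrac{3}{2}n$, and the $o(n)$ loss accumulates only from the prime gap $n-p$ and from the boundary of the restriction interval.

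The main obstacle is exactly the cross-layer no-three-in-line verification: one must rule out triples of the form two points of $C$ together with one point of $C'$, and symmetrically. Writing down the condition that $(x_1,x_1^2)$, $(x_2,x_2^2)$, $(x_3,a x_3^2+b)$ are collinear modulo $p$ gives a coupled system of quadratic congruences in $(x_1,x_2,x_3)$; the plan is to choose $a$, $b$ and $I$ so that this system has no nontrivial solutions, typically reducing to a non-residue condition on a specific discriminant or to a direct elimination argument over $\F_p$. This cross-layer case analysis is the technical heart of the construction in the spirit of Hall--Jackson--Sudbery--Wild \cite{hall1975some}, and is the step I expect to be the main obstacle; once it goes through, the counting and the extraction of the $\tfrac{3}{2}n-o(n)$ bound are routine.
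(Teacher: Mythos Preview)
The paper does not supply a proof of Theorem~\ref{hall}; it is quoted from \cite{hall1975some} as a background result, so there is no in-paper argument to compare your attempt against. Your upper bound $f_{3\text{-coll}}(n)\le 2n$ via pigeonhole on the $n$ horizontal lines is correct and is the standard one-line argument.

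Your lower bound, however, is a programme rather than a proof, and its decisive step is left open. First a minor point: the justification that the modular parabola $C$ has no three collinear points is not quite what you wrote, since a Euclidean line through lattice points has rational (not mod-$p$) coefficients and does not literally yield the congruence $x^2\equiv\alpha x+\beta\pmod p$; the clean argument computes the $2\times 2$ collinearity determinant and notes it is $\equiv (x_1-x_2)(x_2-x_3)(x_3-x_1)\not\equiv 0\pmod p$, hence nonzero in $\Z$. More seriously, the passage from $p$ to $\tfrac32 p$ points is where the theorem lives, and you explicitly defer it: you overlay a second parabola $y\equiv ax^2+b$ on a half-interval $I$ and hope that some choice of $a,b,I$ kills all mixed collinear triples, without exhibiting such a choice or an argument that one exists. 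A quick computation shows that, modulo $p$, a triple $(x_1,x_1^2),(x_2,x_2^2),(x_3,ax_3^2+b)$ is collinear exactly when $(x_1-x_3)(x_2-x_3)\equiv (1-a)x_3^2-b$, which for generic $a,b$ has $\Theta(p)$ solutions $(x_1,x_2)$ for each $x_3$; restricting $x_3$ to an interval of length $p/2$ does not by itself eliminate these, so the ``non-residue condition on a discriminant'' you allude to would have to do real work that is nowhere indicated. The actual construction in \cite{hall1975some} is of a different shape: one takes a prime $p$ near $n/2$ and a \emph{single} modular hyperbola $xy\equiv k\pmod p$ inside a $2p\times 2p$ grid, so that each nonzero residue class is represented twice in each coordinate; for a carefully chosen $k$ a cubic-congruence analysis rules out collinear triples and yields about $3p\approx\tfrac32 n$ points. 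Your two-parabola scheme would need a verification of comparable depth, which is absent.
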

	
	It is still open to decide even the asymptotic behavior of the function above.

	We now consider the pattern of isosceles triangles.
	The problem of determining  the maximum number $f_{iso\triangle}(n)$ of  points  we can choose in the $n\times  n$ square grid, so that no three of them form the
	vertices of a (possibly flat) isosceles triangle was posed independently by Wu \cite{Wu},  
	Ellenberg–Jain \cite{Ellen}, and it is also attributed to Erdős \cite{AI}.
	
	To our best knowledge, the best known lower bound on $f_{iso\triangle}(n)$ is sublinear due to Charton, Ellenberg, Wagner, and  Williamson \cite{AI}. They remarked that most probably a linear lower bound can be achieved via the random independent set process, and conjecture that the answer is indeed linear, based on the constructions found by Patternboost \cite{AI}. The best known upper bound is almost quadratic,  $f_{iso\triangle}(n)\le \exp(-c\cdot\log^{1/9}n)n^2$ based on the bound on $3$-term AP-free sets \cite{AI, KM}.
	
	A special case of isosceles triangles are isosceles right triangles. A corner is an axis-parallel isosceles right triangle, i.e., one with legs  parallel to the axes.
	Ajtai and Szemerédi proved in \cite{ASz} that for
	every axis-parallel corner-free subset of $[n] \times [n]$  has size $o(n^2)$.  The best bound is due to Shkredov.
	
	\begin{theorem}[Shkredov, \cite{Sh}]
		For sufficiently large $n$, every subset of $[n] \times [n]$ of size
		at least $n^2/(\log \log n)^C$
		contains corners, three points with coordinates
		$\{(a, b),(a + d, b),(a, b + d)\}$, where where $C> 0$ is an absolute constant. 
	\end{theorem}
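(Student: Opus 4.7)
The plan is to adapt the Fourier-analytic density increment strategy to the corners problem. Let $A \subseteq [n]^2$ be corner-free with density $\delta = |A|/n^2$, and embed $[n]^2$ into $G = \Z_N \times \Z_N$ with $N$ a prime of order $\Theta(n)$. Counting corners via the trilinear expression
\[
T(A) = \sum_{x,y,d \in \Z_N} 1_A(x,y)\, 1_A(x+d,y)\, 1_A(x,y+d),
\]
a "random" set of density $\delta$ would produce $\Theta(\delta^3 N^3)$ configurations, whereas the corner-free hypothesis forces $T(A) \le |A|$ (only degenerate $d=0$ terms contribute). Decomposing $1_A$ along rows, columns, and anti-diagonals $\{x+y=t\}$, this deficit transfers into one of the three slice functions possessing a non-trivial $L^2$-mass on its Fourier spectrum.

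The next step is to convert this spectral mass into a density increment on a structured set. The original Ajtai--Szemerédi argument locates a large axis-parallel sub-rectangle on which $A$ has density $\delta(1+c)$, but the sub-rectangle shrinks too quickly, yielding only $(\log^* n)^{-c}$ bounds. Shkredov's improvement, which the proof would emulate, is to invoke Chang's theorem to bound the large-spectrum of the anti-diagonal slice function by a set of dimension $d = O(\delta^{-O(1)})$, and then use a Bogolyubov-type argument to produce a Bohr set $B \subseteq \Z_N$ of this dimension and radius $\rho = \delta^{O(d)}$ on which a dilated shift of $A$ has density at least $\delta(1 + c\delta^{O(1)})$. Restricting the whole configuration to a suitable translate of $B \times B$ and passing to a long arithmetic progression inside $B$ returns us to a grid of reduced size, on which the argument restarts.

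Iterating the density increment drives $\delta$ to a constant after $O(\delta^{-O(1)})$ steps. Each step shrinks the effective side length $n$ by a factor that is polynomial in $\delta^{-1}$ (through the Bohr-set radius and the embedding of a long progression into it), so the iteration can be sustained as long as $\log n$ exceeds $\delta^{-C_0}$ for some absolute $C_0$; inverting this bound gives precisely $\delta \gtrsim (\log\log n)^{-C}$. The main obstacle, and where the bulk of the technical work lies, is the quantitative bookkeeping of this iteration: one must maintain the Bohr-set dimension bounded by $\delta^{-O(1)}$ throughout (rather than letting it blow up), control the interaction of two Bohr sets inside $\Z_N \times \Z_N$, and guarantee that the restricted set in the sub-grid still satisfies a corner-free hypothesis in its own coordinate system. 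Closing this loop with losses only polynomial in $\delta^{-1}$ per step is exactly what produces the $(\log\log n)^{-C}$ strength, rather than the weaker $(\log^* n)^{-c}$ bound obtained by a naive implementation.
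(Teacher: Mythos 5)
The paper offers no proof of this statement: it is quoted verbatim from Shkredov's work \cite{Sh}, a substantial research paper in its own right, so there is no internal argument to compare yours against. Judged on its own terms, your proposal is a reasonable high-level roadmap of the density-increment strategy, but it is an outline rather than a proof, and it contains one step that, as written, would fail. You claim that the deficit in the trilinear count $T(A)$ "transfers into one of the three slice functions possessing a non-trivial $L^2$-mass on its Fourier spectrum." This is the point where the corners problem genuinely differs from Roth's theorem: the count of corners is \emph{not} controlled by the Fourier coefficients of row, column, or anti-diagonal slices. The relevant notion of uniformity is a rectangular (box-norm) one, for which no useful Fourier inverse theorem is available, and this is precisely why the corners theorem resists the clean $1/\log n$-type bounds of the one-dimensional setting. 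Shkredov's actual argument obtains a density increment not on a Bohr set cross a Bohr set but on a Cartesian product $E_1\times E_2$ of structured but non-group-like sets, and the bulk of his paper is devoted to building a theory of corner-counting and uniformity \emph{relative to such products} so that the iteration can be restarted; this is the content you defer to "quantitative bookkeeping." Acknowledging that the technical core remains to be done is honest, but it means the proposal does not constitute a proof: the missing part is not routine verification but the central innovation of the theorem.
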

	
	Concerning the case of  \textit{isosceles right triangles in general position}, Prendiville showed similar upper bound $n^2/(\log \log n)^C$ \cite{Pren}, which was subsequently improved by Bloom \cite{Bloom} and then Pilatte \cite{Pilatte}, see also \cite{SS}.
	
	\begin{theorem}[Pilatte \cite{Pilatte}] The maximum size isosceles right triangle free set in $[n]^2$ is of size  $O(n^2/(\log n)^{1+\varepsilon})$ for every $\varepsilon>0$.
	\end{theorem}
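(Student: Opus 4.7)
The plan is to reduce the 2D isosceles right triangle problem to Bloom's quasipolynomial bound for 3-AP-free subsets of $[N]$, which states that such a set has size $O(N/(\log N)^{1-\varepsilon})$. The starting observation is that every non-degenerate isosceles right triangle in $[n]^2$ has vertices of the form $\{a,\,a+v,\,a+Rv\}$ for some nonzero $v\in\Z^2$ and some rotation $R$ by $\pm 90^\circ$. Identifying $\Z^2$ with the Gaussian integers $\Z[i]$, this rotation becomes multiplication by $i$, and the forbidden configuration becomes the affine pattern $\{a,\,a+v,\,a+iv\}$; the problem is now one of forbidding a translation-invariant ternary pattern in an abelian group.

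First I would embed $[n]^2$ into the finite group $G=\Z[i]/N\Z[i]\cong(\Z/N\Z)^2$ for a prime $N$ of size $\Theta(n)$, chosen large enough to prevent wrap-around artifacts. Writing $\delta=|S|/N^2$, the number of configurations $(a,v)$ with $a,a+v,a+iv\in S$ is given by a triple convolution whose random model produces $\delta^3 N^4$ configurations. Configuration-freeness then forces a significant failure of Fourier uniformity, and the usual $L^2$-manipulation of convolutions yields a non-trivial character $\chi$ of $G$ with $|\widehat{1_S}(\chi)|\gg \delta^2 N^2$.

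I would then run a density-increment argument in the style of Meshulam--Bloom. A single large Fourier coefficient produces a Bohr set $B\subseteq G$ of bounded rank on which $S$ has relative density at least $\delta(1+c\delta)$. On such a Bohr set the rotated-corner pattern linearises along a one-dimensional slice, reducing the 2D extremal question to a 1D 3-AP-freeness problem on an arithmetic progression of length $N^{1-o(1)}$. Iterating this increment and invoking Bloom's 1D bound at the termination step would yield the advertised estimate on $\delta$.

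The main obstacle, to my mind, is calibrating the density-increment step so that Bloom's $(\log N)^{1-\varepsilon}$ savings survive through the 2D-to-1D passage. In particular, the Bohr-set machinery must be adapted to the two-dimensional ambient group and to Gaussian frequencies, because naive sublattice arguments would shrink the domain too aggressively and spoil the quantitative bound. Executing this carefully---preserving the $\Z[i]$-module structure while mimicking Bloom's Fourier-analytic scheme---is where the technical core of the proof lies.
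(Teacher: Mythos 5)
The paper does not actually prove this statement: it is quoted verbatim as a known result from Bloom's thesis, so there is no internal proof to compare against. Judged on its own merits, your overall architecture is the right one and matches how this bound is obtained in the literature (Prendiville, Bloom, Shkredov--Solymosi): identify $\Z^2$ with $\Z[i]$, observe that an isosceles right triangle with apex $a$ is exactly the pattern $\{a,\,a+v,\,a+iv\}$, i.e.\ the solution set of the translation-invariant equation $(i-1)x - iy + z = 0$ over $\Z[i]$, embed in $\Z[i]/N\Z[i]$ for a suitable prime $N=\Theta(n)$, and note that configuration-freeness forces a large nontrivial Fourier coefficient since the trivial solutions contribute only $O(\delta N^2)$ against a main term of $\delta^3 N^4$. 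All of that is sound, including the observation that the coefficients $i-1,\,-i,\,1$ are (essentially) units so the relevant character substitutions are permutations.

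The genuine gap is in the density-increment step, which is where the quantitative content lives. Your claim that ``the rotated-corner pattern linearises along a one-dimensional slice, reducing the 2D extremal question to a 1D 3-AP-freeness problem'' cannot work as stated: $v$ and $iv$ are orthogonal, so the configuration $\{a,a+v,a+iv\}$ is never contained in a line unless $v=0$, and there is no slicing of $(\Z/N\Z)^2$ along which the pattern becomes a one-dimensional 3-AP. Bloom's $(\log N)^{1-\varepsilon}$ bound is not obtained by invoking the 1D Roth bound as a black box at termination; rather, his machinery for translation-invariant equations (Bohr sets, the structure of the large spectrum, and a density increment that exploits many frequencies simultaneously) must be run natively in the two-dimensional module $\Z[i]$. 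Relatedly, the single-character increment you describe, giving density $\delta(1+c\delta)$ per step, only yields a bound of shape $N^2/\log\log N$ after iteration; the advertised $(\log n)^{1-\varepsilon}$ saving is precisely what the ``many increments at once'' refinement buys, and your sketch does not supply a mechanism for it.
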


	\section{Four-point configurations}

	\subsection{Collinear $4$-tuples}
	
	The natural generalisation of the no-$3$-in-line problem is to forbid collinear $4$-tuples in the $n\times n$ grid. This has been studied by  Lefmann \cite{Lefmann}. A very recent improvement on $f_{4-coll}(n)$ is due to Kovács, Nagy, and Szabó \cite{knsz}. For results on the general  no-$(k+1)$-in-line problem, see also \cite{kwan, kovacs2025settling}. 
	
	\begin{theorem}[Kovács-Nagy-Szabó \cite{knsz}]\label{k=3}     $  f_{4-coll}(n)\le 3n$ holds for all $n$ and
		$ 1.973 n\le f_{4-coll}(n)$ for $n$ sufficiently large.
	\end{theorem}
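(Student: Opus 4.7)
The upper bound is essentially immediate: partition $[n]^2$ into its $n$ horizontal rows; since no line contains four points of the chosen set, each row contributes at most three points, giving the bound $f_{4-coll}(n) \le 3n$. I would dispose of this in one sentence and focus attention on the lower bound, which is the substantive half of the theorem.

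For the lower bound my plan is to construct $S \subset [n]^2$ as a union of graphs of low-degree polynomials over a prime field and then alter by deleting the few points witnessing any remaining four-in-a-line. Fix a prime $p$ with $p\sim n$, choose a set of ``leading coefficients'' $C \subset \F_p^*$, and set
\[
S := \bigcup_{c \in C}\bigl\{(x,\, cx^3 \bmod p) : x \in \F_p\bigr\} \subset [p]\times[p].
\]
A single graph $\Gamma_c$ is already four-collinear-free, because any affine line $y=\alpha x + \beta$ meets $\Gamma_c$ in the zeros of $cx^3-\alpha x-\beta \in \F_p[x]$, of which there are at most three. Summing over $c$ one has $|S|\approx |C|\cdot p$ (distinct $\Gamma_c$ share only $(0,0)$), so increasing $|C|$ pushes the constant through $1$ and toward $2$. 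The price is that some lines may now hit $S$ in more than three places; let $B\subset S$ be a set that meets every such ``bad'' line, and choose $|C|$ so that $|S|-|B|\geq 1.973\,n$ asymptotically.

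The main obstacle in this plan is the alteration step: controlling the number of four-in-line configurations supported on $S$. A four-in-line through $\Gamma_{c_1},\dots,\Gamma_{c_4}$ at abscissae $x_1,\dots,x_4\in \F_p$ forces two determinantal relations among the points $(x_i, c_i x_i^3)$ over $\F_p$, and I would count solutions by Bezout-type arguments combined with character-sum or direct incidence bounds between the $\Gamma_c$'s and lines. One then shows the expected bad-set size is of lower order than $|S|$ and optimises over $|C|$ to extract an explicit constant. Pushing that constant all the way up to $1.973$ (as opposed to any constant strictly exceeding $1$) seems to demand either a carefully tuned family with more structure---for instance vertical translates $\{(x, cx^3+d)\}$ or nonsingular plane cubics with many $\F_p$-rational points, whose pairwise line-intersection patterns admit sharper bookkeeping---or a second-moment refinement of the alteration. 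Axis-parallel and ``vertical'' lines, which monomial cubics handle asymmetrically, would require a separate but routine treatment, and the prime $p$ must be chosen with $n-p=o(n)$ to transfer the count from $\F_p^2$ to $[n]^2$ without loss in the constant.
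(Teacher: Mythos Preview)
The paper does not prove this theorem; it is quoted as an external result of Kov\'acs, Nagy and Szab\'o \cite{knsz}, so there is no ``paper's own proof'' to compare against. That said, a few comments on your proposal are in order.

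Your upper bound is fine and is exactly the one-line argument one expects. For the lower bound, however, what you have written is a plan rather than a proof, and you openly concede as much: you say the alteration step is ``the main obstacle'' and that reaching $1.973$ ``seems to demand'' further structure or a second-moment refinement you do not supply. The general architecture --- unions of graphs of low-degree polynomials over $\F_p$ together with a deletion argument --- is indeed the mechanism behind the cited result (the title of \cite{knsz} is ``Randomised algebraic constructions for the no-$(k+1)$-in-line problem''), so you are on the right track. But the substance of that paper is precisely the analysis you leave open: a specific randomised family, a careful count of collinear $4$-tuples across curves, and an optimisation yielding the numerical constant. Without that analysis you have at best shown $f_{4\text{-}coll}(n)\ge (1+c)n$ for some unspecified $c>0$, not $1.973n$. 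If you want to claim the theorem, you need to either reproduce or cite the actual counting argument from \cite{knsz}; the sketch as written does not establish the stated lower bound.
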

	
	\subsection{Parallelograms}
	
	A \emph{parallelogram} in $[n]^2$ is any quadruple of points
	\[
	(x_1,y_1),\ (x_2,y_2),\ (x_3,y_3),\ (x_4,y_4)
	\]
	satisfying
	\begin{equation}\label{eq:paral}
		(x_1,y_1)+(x_3,y_3) = (x_2,y_2)+(x_4,y_4).
	\end{equation}
	We call a set $\cP\subset [n]^2$ \emph{parallelogram-free} if such an equality does
	not hold except in the trivial case where the four points are not all distinct.
	
	\begin{defi}
		$f_{para}(n)$ denotes the maximum number of points that can be chosen from the grid $[n]^2$ without forming a parallelogram.
	\end{defi}

	Equivalently, $\cP$ is parallelogram-free if all pairwise sums of its points are
	distinct, that is,
	\[
	(a_1+a_2)=(a_3+a_4)\quad\Longrightarrow\quad \{a_1,a_2\}=\{a_3,a_4\}
	\] holds for the elements $a_i\in [1,n]^2$ of the set.
	Thus a parallelogram-free subset of the grid is exactly a
	\emph{two-dimensional Sidon set} (or $B_2$-set) in the Abelian group $\mathbb Z^2$. This enables us to recall the classical result of Lindström, and formalize the result for    $f_{para}(n)$.

	\begin{theorem} [Lindström \cite{Lindstrom}]
		$f_{para}(n) = n + O(n^\frac{2}{3})$
	\end{theorem}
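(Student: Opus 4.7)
The overall strategy is to reduce to the one-dimensional Sidon setting, via the observation (already noted in the excerpt) that parallelogram-free subsets of $[n]^2$ are exactly two-dimensional Sidon ($B_2$) sets in $\mathbb{Z}^2$, and to exploit Lindstr\"om's sharp one-dimensional inequality $|T| \leq \sqrt{N} + N^{1/4} + 1$ for Sidon subsets $T \subseteq [N]$.

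The main device will be the natural bijection $\phi : [n]^2 \to [n^2]$ given by $\phi(x, y) = xn + y$ (after a harmless shift of indices). For the lower bound $f_{para}(n) \geq n - O(n^{2/3})$ I would pull back a large 1D Sidon set through $\phi$: if $T \subseteq [n^2]$ is 1D Sidon, then $\phi^{-1}(T) \subseteq [n]^2$ is 2D Sidon, because any parallelogram relation $(x_1, y_1) + (x_3, y_3) = (x_2, y_2) + (x_4, y_4)$ in $\mathbb{Z}^2$ immediately implies $\phi(p_1) + \phi(p_3) = \phi(p_2) + \phi(p_4)$ in $\mathbb{Z}$, forcing $\{p_1, p_3\} = \{p_2, p_4\}$ by the 1D Sidon property. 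Plugging in Singer's construction of a 1D Sidon set of size $q+1$ in $[q^2+q+1]$ for each prime power $q$, together with Ingham's prime-gap estimate $p_{k+1} - p_k = O(p_k^{5/8})$, yields a 1D Sidon set of size $n - O(n^{2/3})$ inside $[n^2]$, and hence a 2D Sidon set of the same size inside $[n]^2$.

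For the upper bound $f_{para}(n) \leq n + O(n^{2/3})$, the natural plan is to reverse the embedding: given a 2D Sidon $S \subseteq [n]^2$, argue that $\phi(S) \subseteq [n^2]$ is (essentially) 1D Sidon, and then Lindstr\"om's bound yields $|S| = |\phi(S)| \leq n + O(\sqrt{n})$. However, $\phi$ is not Sidon-preserving in this direction: writing $A := x_1 + x_3 - x_2 - x_4$ and $B := y_1 + y_3 - y_2 - y_4$, a 1D collision $\phi(p_1) + \phi(p_3) = \phi(p_2) + \phi(p_4)$ amounts to $An + B = 0$ with $|A|, |B| \leq 2n - 2$, so either $(A, B) = (0, 0)$ (a genuine 2D parallelogram, ruled out by Sidon) or $(A, B) = (\pm 1, \mp n)$ (a \emph{shift} collision, perfectly consistent with $S$ being 2D Sidon).

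The main obstacle is therefore to control these shift collisions, which spoil the Sidon property of $\phi(S)$ and prevent a direct application of Lindstr\"om's inequality. A shift collision in $S$ encodes a translate of one unordered pair of $S$ into another by the fixed vector $(\pm 1, \mp n)$; since $S$ itself is 2D Sidon, the number of such shifts is already restricted by Sidon-type counting, and I would derive a subquadratic upper bound on their total number via a secondary sum-counting argument in $\mathbb{Z}^2$. Combined with a refinement of Lindstr\"om's window-counting argument that tolerates a controlled excess of repeated pair sums, this should produce the claimed estimate with a loss of only $O(n^{2/3})$ - exactly matching the Ingham prime-gap error on the construction side, so that the two bounds meet in the middle.
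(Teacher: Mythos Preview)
The paper does not prove this theorem; it is quoted as Lindstr\"om's classical result and no argument is supplied. So there is nothing in the paper to compare your approach against, and the question is simply whether your outline is a valid proof.

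Your lower bound is fine: pulling a one-dimensional Sidon set back through $\phi(x,y)=(x-1)n+y$ does preserve the $B_2$ property in the direction you need, and Singer plus any reasonable prime-gap bound gives $n-O(n^{2/3})$ (in fact better). The upper bound, however, has a real gap. You correctly identify that $\phi$ fails to be Sidon-preserving in the forward direction because of the carry solutions $(A,B)=(\pm1,\mp n)$, but your remedy is only a hope: ``a secondary sum-counting argument'' and ``a refinement of Lindstr\"om's window-counting argument that tolerates a controlled excess of repeated pair sums'' are not arguments, and it is not clear they can be made into one. Concretely, one can check that each one-dimensional sum in $\phi(S)$ is attained at most twice, so $\phi(S)$ is a $B_2[2]$ set in $[n^2]$; but the sharp bound for $B_2[2]$ sets in $[N]$ is only $\sqrt{2N}(1+o(1))$, which gives $|S|\le\sqrt{2}\,n$ rather than $n+o(n)$. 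Equivalently, if you enlarge the base to $2n-1$ to kill the carries, $\phi(S)$ becomes genuinely Sidon but now lives in $[2n^2]$, and Lindstr\"om's one-dimensional inequality again yields only $\sqrt{2}\,n$. To reach the leading constant $1$ you must exploit the two-dimensional structure directly, as Lindstr\"om does in the cited paper: his argument is a genuinely $d$-dimensional version of the difference-counting / windowing method, not a reduction to $d=1$. Your embedding strategy, as stated, cannot recover the correct main term.
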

	
	It is natural to ask how the maximum would change if we only forbid non-degenerate parallelograms, i.e., collinear solutions of \eqref{eq:paral} are permitted.

	\begin{defi}
		$f_{nd\_para}(n)$ denote the maximum number of points one can choose from the grid $[1,n]^2$ without forming a non-degenerate parallelogram. 
	\end{defi}
	
	Our result is as follows.
	
	\begin{prop}\label{nondegpar}
		$f_{nd\_para}(n)=2n-1$.
	\end{prop}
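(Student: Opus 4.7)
The plan is to prove matching bounds by producing an explicit lower-bound construction and using a column-difference pigeonhole for the upper bound.

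\textbf{Lower bound.} I would take the L-shape $\cL := (\{1\}\times [n]) \cup ([n]\times \{1\})$, which has exactly $2n-1$ points. The verification that $\cL$ contains no non-degenerate parallelogram rests on the fact that a non-degenerate parallelogram has no three collinear vertices, so at most two vertices lie on the bottom row and at most two on the leftmost column. Hence any putative parallelogram in $\cL$ has exactly two vertices on each arm, possibly sharing the corner $(1,1)$. A short case analysis of the three pairings $a+c=b+d$ of opposite vertices then reduces each case to a linear equation in positive integers with no solution, typically of the form $i+i'=2$ with $i,i'\ge 2$, or forcing two row-vertices to coincide.

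\textbf{Upper bound.} Suppose $S\subset[n]^2$ has $|S|\ge 2n$ and is free of non-degenerate parallelograms. For each column $i\in [n]$ set $r_i := |\{y:(i,y)\in S\}|$ and
\[
D_i := \{y-y':\ (i,y),(i,y')\in S,\ y>y'\}\subseteq [1,n-1].
\]
The argument rests on two observations. First, $|D_i|\ge r_i-1$, since subtracting the smallest $y$-coordinate in column $i$ from the other $r_i-1$ coordinates yields that many distinct positive differences. Second, the sets $D_i$ are pairwise disjoint: if $d\in D_i\cap D_j$ with $i\ne j$, then $S$ contains pairs $(i,y),(i,y+d)$ and $(j,y'),(j,y'+d)$, whose union is an axis-parallel rectangle, and hence a non-degenerate parallelogram. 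Summing,
\[
|S|-\#\{i:r_i\ge 1\}\;\le\;\sum_{i=1}^n|D_i|\;\le\;n-1,
\]
and since at most $n$ columns are non-empty, $|S|\le 2n-1$, contradicting $|S|\ge 2n$.

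\textbf{Main obstacle.} Neither step poses a real difficulty. The upper bound is a one-line pigeonhole once one notices that an axis-parallel rectangle is already a non-degenerate parallelogram. The only point requiring care is in the lower-bound verification, where one must ensure that including the corner $(1,1)$ together with one point from each arm plus a fourth point does not secretly produce a non-degenerate parallelogram; running through the three pairings of opposite vertices handles this uniformly.
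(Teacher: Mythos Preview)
Your argument is essentially identical to the paper's: the same L-shaped construction for the lower bound, and the same pigeonhole on vertical (the paper uses horizontal) difference sets for the upper bound. One small correction: the four points $(i,y),(i,y+d),(j,y'),(j,y'+d)$ need not form an axis-parallel \emph{rectangle} (only when $y=y'$), but they always form a non-degenerate parallelogram with two vertical sides, which is all you need.
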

	\begin{proof} The union of the point set of a horizontal and a vertical line $\{(x,y_0): x\in [1,n]\}\cup \{(1,y): y\in [1,n]\}$ with $ y_0 \in [1,n]$ provides a possible point set of size $2n-1$ with the required property.
		
		To prove the upper bound $f_{nd\_para}(n)\le 2n-1$, take a point set $\cP$  which avoids non-degenerate parallelograms and  consider the set of distances of its point pairs having the same $y$-coordinate.  The distances can only be the integers in $[1, n-1]$, so $n-1$ different values can be admitted. Each value can only occur in at most one row, otherwise we would have a parallelogram with a horizontal base. Let $k_i$ denote $k_i:=|\ell[y=i]\cap \cP|$. Then for every horizontal line $\ell[y=i]$ which intersect $\cP$ yields at least $k_i-1$ distinct distances.   As a consequence, $$|\cP|=\sum_{i=1}^n k_i, \text{ \ \  while \ \ }\sum_{i=1}^n \max\{(k_i-1), 0\}\le n-1.$$
		Hence, $|\cP|\le 2n-1$.
	\end{proof}

	\subsection{Isosceles trapezoids, concylic four-tuples}
	
	An analogue of the no-$3$-in-line problem is also attributed to  Erdős and Purdy, where the forbidden configuration is a concyclic  set of four points. Here collinear sets are also considered concyclic as a general case. Let $f_{cyclic}(n)$ denote the maximum size of point set of the grid $[n]^2$ which  avoids a concyclic $4$-tuple. Then the upper bound of Thiele \cite{Thiele2} and the recent lower bound of Dong and Xu \cite{Dong} shows the result below.
	
	\begin{theorem}[\cite{Thiele2, Dong}]
		$n-o(n)\le f_{cyclic}(n)\le 2.5n-1.5.$
	\end{theorem}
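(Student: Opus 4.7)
The two bounds are independent and are proved by different methods.

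For the upper bound $f_{cyclic}(n)\le 2.5n-1.5$, the plan is a midpoint-counting argument in the spirit of Thiele. Let $\cP\subseteq[n]^2$ be concyclic-$4$-free and write $k_i:=|\cP\cap R_i|$ for the number of its points in the $i$-th horizontal row $R_i$. Since $4$ collinear points count as a concyclic quadruple by convention, every row carries at most $3$ points, hence $k_i\le 3$. To each horizontal pair $\{(x_1,i),(x_2,i)\}\subseteq\cP$ I would attach its $x$-midpoint $(x_1+x_2)/2$, which takes one of the $2n-3$ values in $\{1.5, 2, \ldots, n-0.5\}$. The key step is to verify that each such value is used by at most one pair: if two disjoint horizontal pairs share an $x$-midpoint, then either they lie in a common row (producing four collinear, hence concyclic, points) or in distinct rows (in which case the four points are symmetric about a common vertical line, and a direct computation locates a circle through all four). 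Both situations are excluded, giving
$$\sum_i \binom{k_i}{2}\le 2n-3.$$

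From here the proof reduces to a short linear-programming calculation. Writing $a_j$ for the number of rows carrying exactly $j$ points, one has $|\cP|=a_1+2a_2+3a_3\le n+a_2+2a_3$, since $a_1\le n-a_2-a_3$. The maximum of $a_2+2a_3$ over the polytope $\{a_2+3a_3\le 2n-3,\; a_2+a_3\le n,\; a_j\ge 0\}$ is attained at the vertex where both linear constraints are tight, namely $a_3=(n-3)/2$ and $a_2=(n+3)/2$, yielding $|\cP|\le n+(3n-3)/2=(5n-3)/2$.

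For the lower bound $n-o(n)\le f_{cyclic}(n)$, the plan is to exhibit a subset of $[n]^2$ of size close to $n$ containing no concyclic quadruple. The natural starting point is a strictly convex non-circular curve, the simplest being a parabola $y=\alpha x^2$. Four points $(x_j,\alpha x_j^2)$ on the exact parabola are concyclic if and only if $x_1+x_2+x_3+x_4=0$, because intersecting the parabola with a general circle produces a quartic in $x$ with vanishing cubic term; restricting to positive abscissas therefore rules out every concyclic quadruple on the curve. Since the parabola itself carries only $O(\sqrt n)$ lattice points of $[n]^2$, the construction must then be discretised (for instance by setting $y_x=\lfloor \alpha x^2\rfloor$ for $x\in[n]$ with a carefully tuned $\alpha$) and a $o(n)$-sized set of offending points deleted via a probabilistic alteration.

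The main obstacle is the lower bound: once the parabola is discretised, the algebraic identity responsible for non-concyclicity is broken, and the rounded points may admit incidental concyclic quadruples. Bounding the number of such bad $4$-tuples---whether via a Diophantine estimate on the rounding errors, a random-perturbation argument, or exploitation of the near-convex geometry of the discretised curve---is the technical heart of the Dong and Xu construction, and the step where the main work of the lower bound has to be done.
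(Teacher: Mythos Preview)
The paper does not itself prove this theorem; it merely cites the upper bound to Thiele and the lower bound to Dong--Xu, adding only the remark that Thiele's bound already follows from forbidding axis-parallel isosceles trapezoids and that Dong--Xu's construction is ``algebraic geometric''. So there is no in-paper proof to compare against, but your two halves can be assessed against those hints.

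Your upper-bound argument is correct and is exactly the mechanism the paper alludes to: two horizontal pairs with a common $x$-midpoint form an axis-parallel isosceles trapezoid, hence a concyclic quadruple, so the midpoints of all $\sum_i\binom{k_i}{2}$ horizontal pairs are distinct elements of a $(2n-3)$-element set. Combined with $k_i\le 3$ and the convexity/LP step (which you carry out cleanly, and which can be written invariantly as $a_2+2a_3=\tfrac12(a_2+a_3)+\tfrac12(a_2+3a_3)\le \tfrac12 n+\tfrac12(2n-3)$), this gives $|\cP|\le (5n-3)/2$. This is Thiele's argument.

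Your lower-bound plan, by contrast, is not a proof but a programme, and it diverges from the cited source. Dong and Xu do not discretise a real parabola and then delete bad quadruples; their construction is algebraic (in the spirit of Thiele's modular parabola $y\equiv x^2\pmod p$, which already gives $n/4$), and the $n-o(n)$ count comes from an explicit curve rather than from a probabilistic alteration. Your rounding approach faces a genuine obstacle you yourself flag: once $y_x=\lfloor\alpha x^2\rfloor$, the Vieta identity $x_1+x_2+x_3+x_4=0$ is destroyed, and you would need to show that the number of incidental concyclic $4$-tuples among the $n$ rounded points is $o(n)$. There is no a priori reason this should hold---a generic $n$-point set in $[n]^2$ has $\Theta(n^3)$ concyclic quadruples, and convex position alone does not reduce this below linear---so the deletion step as stated is not yet an argument. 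In short: the upper bound is done; the lower bound is a reasonable heuristic but lacks the key estimate, and the actual Dong--Xu proof proceeds along different, algebraic lines.
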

	
	Note that the approach of Dong and Xu is algebraic geometric and improved upon a  previous algebraic construction of Thiele \cite{Thiele}, which had size $n/4$. We  mention that an even more recent randomized construction by Ghosal, Goenka and Keevash \cite{Keevash} also improved significantly the  lower bound of Thiele, providing $\frac{7}{12}n\le f_{cyclic}(n)$.
	
	While the lower and upper bounds are relatively close,  the upper bound of Thiele actually can be derived by forbidding a far less general type of configuration.  Let $f_{ax-isotr}(n)$ denote the maximum size  of point set of the grid $[n]^2$ which  avoids isosceles trapezoids having an axis-parallel side pair.  Then
	\begin{equation}
		f_{cyclic}(n)\le f_{ax-isotr}(n)\le 2.5n-1.5 
	\end{equation}holds as well \cite{Thiele2}.

	\subsection{Rhombuses}

	Let 
	$f_{rho}(n)$ denote the maximum size of a point set $ \cP \subseteq [1,n] \times [1,n] $ that does not contain the point set of a rhombus.

	\begin{theorem}\label{rhombus}
		$f_{rho}(n)= \Omega\left(\frac{n^{4/3}}{(\log n)^{1/3}}\right)$.
	\end{theorem}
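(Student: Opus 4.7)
My plan is to apply the alteration method (random sampling followed by deletion) to a carefully chosen base set $X \subset [n]^2$ whose rhombuses are controlled by a Sidon structure. Take $B \subset [n]$ a classical Sidon set of size $|B| = (1-o(1))\sqrt n$ (e.g., Singer's construction) and set $X := B \times [n]$, so $|X| \asymp n^{3/2}$. The hope is that $B$ being Sidon severely restricts the rhombuses inside $X$, so that a random $p$-sparse sample of $X$ contains fewer rhombuses than points already for a $p$ much larger than what the full grid allows.

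The first step is to analyze the structure: any rhombus is a non-degenerate parallelogram with $P_1 + P_3 = P_2 + P_4$, and projecting to the first coordinate the Sidon property of $B$ forces $\{b_1,b_3\}=\{b_2,b_4\}$ as multisets. A short case analysis (ruling out collinear configurations) then shows that every rhombus in $X$ has exactly two distinct $x$-coordinates $b_1 \ne b_2$ and can be written as
\[
(b_1,y),\ (b_1,y+h),\ (b_2,y+u),\ (b_2,y+u+h),
\]
where equality of adjacent side lengths becomes the Pythagorean identity $h^2 = (b_2-b_1)^2 + u^2$. Writing $a:=b_2-b_1 \ne 0$ and summing over these parameters yields
\[
R(X) \;\le\; 2n \sum_{a=1}^{n} \#\{(u,h) \in \mathbb Z \times \mathbb Z_{>0} : h^2 = a^2 + u^2,\ h \le n\} \cdot \mathbf{1}[a \in B - B],
\]
where the factor $n$ bounds the number of choices for $y$. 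Since each difference appears at most once in $B$, dropping the indicator leaves twice the number of integer Pythagorean triples with hypotenuse at most $n$, which by a classical Lehmer-type estimate is $O(n \log n)$; hence $R(X) = O(n^2 \log n)$.

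The alteration is then standard: include each point of $X$ in a random subset $Y$ independently with probability $p$, so that $\mathbb{E}|Y| = p|X| \asymp p n^{3/2}$ and the expected number of rhombuses in $Y$ is at most $p^4 R(X) = O(p^4 n^2 \log n)$. Deleting one vertex from each surviving rhombus produces a rhombus-free subset of expected size at least $p|X| - p^4 R(X)$, and the choice $p = c\, n^{-1/6}(\log n)^{-1/3}$ for a sufficiently small $c>0$ balances the two terms to yield the desired bound $\Omega\big(n^{4/3}(\log n)^{-1/3}\big)$. The main technical obstacle is the rhombus count with the sharp logarithmic factor: the Sidon property reduces the problem to counting Pythagorean triples with bounded hypotenuse, and one needs the classical $O(n \log n)$ estimate (rather than a weaker divisor bound such as $O(n^{1+o(1)})$) to recover exactly $(\log n)^{-1/3}$ in the final answer.
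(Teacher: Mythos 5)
Your proposal is correct and matches the paper's argument essentially step for step: the paper takes $[1,n]\times S$ with $S$ Sidon (the reflection of your $B\times[n]$), uses the Sidon property on the diagonal identity to force two axis-parallel sides, invokes the Benito--Varona $O(n\log n)$ count of Pythagorean legs below $n$ to bound the number of rhombuses by $O(n^2\log n)$, and then applies the same alteration with $p\asymp n^{-1/6}(\log n)^{-1/3}$. The only cosmetic difference is that the paper treats axis-parallel squares as a separate $O(n^2)$ case, whereas your parametrization absorbs them as the degenerate $u=0$ solutions of $h^2=a^2+u^2$.
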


	
	In order to prove Theorem \ref{rhombus}, we will need some lemmata.
	
	\begin{lemma}[Benito, Varona, \cite{pythagorean}]\label{pitlemma}
		The number of ordered Pythagorean triples $(a,b,c)$ such that $a<n$, $b<n$ and $a^2+b^2=c^2$
		is $O(n\log n)$.
	\end{lemma}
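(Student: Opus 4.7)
The plan is to invoke the classical parametrization of primitive Pythagorean triples: every primitive solution with $a$ odd and $b$ even admits a unique representation
\[a = m^2 - k^2, \qquad b = 2mk, \qquad c = m^2 + k^2,\]
where $m > k \geq 1$ are coprime and of opposite parity. Every Pythagorean triple $(a,b,c)$ is then a positive integer multiple $d$ of a primitive one, so the triples with $a, b < n$ are parametrized by choices of $(d, m, k)$ with $m, k$ as above.

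The hypothesis $a, b < n$ implies the (weaker) inequality $c = d(m^2 + k^2) < n\sqrt{2}$, so I reduce the count to triples satisfying this bound on the hypotenuse. For each fixed scaling factor $d \in \{1, 2, \dots, \lfloor n\sqrt{2}\rfloor\}$, the admissible parameter pairs $(m, k)$ lie in the quarter disc
\[\bigl\{(x,y)\in\mathbb{Z}^2 : x > y \geq 1,\ x^2 + y^2 < n\sqrt{2}/d\bigr\},\]
which has area $O(n/d)$ and therefore contains at most $O(n/d)$ lattice points. Summing these contributions yields
\[\sum_{d=1}^{\lfloor n\sqrt{2}\rfloor} O\!\left(\frac{n}{d}\right) = O(n \log n),\]
and multiplying by the constant factor $2$ that accounts for the ordered labelling of the two legs (i.e., swapping the roles of $a$ and $b$) preserves the bound.

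The only mildly delicate point is to confirm that the relaxation from the constraints $a, b < n$ to the single constraint $c < n\sqrt{2}$ yields only a constant-factor over-count, and to observe that the coprimality and opposite-parity restrictions on $(m,k)$ can only decrease the quarter-disc lattice-point count, so the area bound is not compromised. Beyond these routine verifications, there is no substantial obstacle: the logarithmic factor arises inevitably from the harmonic sum over the scaling parameter $d$, which is indeed the source of the $\log n$ in the final bound.
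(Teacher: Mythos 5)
Your proof is correct. Note, however, that the paper does not prove this lemma at all: it is quoted as a black box from Benito and Varona \cite{pythagorean}, whose paper in fact establishes a sharper result (an asymptotic of the form $c\,n\log n + O(n)$ with an explicit constant) by a more careful analysis built on the same classical parametrization. Your argument is the standard proof of the $O(n\log n)$ upper bound and is all the application here needs: reduce to primitive triples via the $(d,m,k)$ parametrization, relax $a,b<n$ to $c<n\sqrt{2}$ (which, as you note, is a superset inclusion, so for an upper bound no over-count control is needed at all --- your worry about a ``constant-factor over-count'' is superfluous), count lattice points in the sector $\{m>k\geq 1,\ m^2+k^2<n\sqrt{2}/d\}$, and sum the harmonic series over $d$. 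The one step you wave at --- that the lattice-point count is bounded by the area --- is not true for arbitrary regions, but is routine here: each lattice point $(m,k)$ with $m,k\geq 1$ and $m^2+k^2<R$ places the unit square $[m-1,m]\times[k-1,k]$ inside the closed quarter-disc of area $\pi R/4$, and these squares have disjoint interiors, so the count is at most $\pi R/4=O(n/d)$ with no boundary-term issue. With that observation made explicit, your proof is complete and self-contained, which is arguably a small gain over the paper's citation, at the cost of not recovering the precise constant of \cite{pythagorean}.
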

	
	\begin{prop}\label{rhombnm} Let $S$ be a Sidon set in $[1,n]$. Then the number of   rhombuses in $[1,n]\times S$ is $O(n^2\log n)$.
	\end{prop}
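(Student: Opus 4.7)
The idea is to use the Sidon property of $S$ to constrain the $y$-coordinates of rhombuses in $[1,n] \times S$, reducing their enumeration to a controllable sum over Pythagorean triples via Lemma~\ref{pitlemma}.

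First I would carry out the Sidon reduction. Labeling the rhombus vertices $V_1, V_2, V_3, V_4$ in cyclic order so that $V_1 V_3$ and $V_2 V_4$ are the diagonals, the parallelogram identity $V_1 + V_3 = V_2 + V_4$ projects to $y_1 + y_3 = y_2 + y_4$ with all $y_i \in S$. Since $S$ is Sidon, this forces $\{y_1, y_3\} = \{y_2, y_4\}$; after relabeling one may assume $y_1 = y_2$ and $y_3 = y_4$, and if in addition $y_1 = y_3$ the four vertices are collinear, yielding only a degenerate rhombus. Hence every non-degenerate rhombus in $[1,n] \times S$ has two opposite horizontal sides, at heights $y_1, y_3 \in S$ with $h := y_3 - y_1 \neq 0$.

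Second I would parametrize such a rhombus by the tuple $(y_1, y_3, s, L, a)$, where $L$ is the common side length, $s$ is the horizontal offset between the two horizontal sides, and $a$ is the $x$-coordinate of the bottom-left vertex. Equality of adjacent side lengths gives $L^2 = s^2 + h^2$, so $(s, h, L)$ is an ordered Pythagorean triple (with a sign attached to $s$), and fitting the rhombus into $[1,n]$ forces $|s|, |h|, L \leq n$. The Sidon property guarantees that for each nonzero $h$ there is at most one pair $(y_1, y_1 + h) \in S \times S$, and for each admissible $(s, h, L)$ the starting position $a$ takes $O(n)$ values. By Lemma~\ref{pitlemma} the total number of Pythagorean triples with entries of absolute value at most $n$ is $O(n \log n)$, so assembling,
\begin{equation*}
\sum_{h \neq 0} \bigl|\{(s, L) : s^2 + h^2 = L^2,\ |s|, L \leq n\}\bigr| \cdot O(n) = O(n) \cdot O(n \log n) = O(n^2 \log n),
\end{equation*}
which upper bounds the number of rhombuses.

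The conceptual heart is the Sidon reduction, which collapses the a priori four-dimensional family of rhombuses into essentially a two-parameter family indexed by $h$ together with the associated Pythagorean datum. After that the counting is routine bookkeeping; the only minor care needed is in handling the sign of $s$ and in noting that the chosen labeling counts each rhombus only a bounded number of times (namely at most twice, by swapping which row is called the ``bottom''). I do not anticipate substantial obstacles.
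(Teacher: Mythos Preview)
Your proposal is correct and follows essentially the same route as the paper's proof: use the Sidon property on the $y$-coordinates to force a pair of horizontal sides, then parametrize by the height difference $h$, the horizontal offset $s$, and the starting $x$-coordinate, and invoke Lemma~\ref{pitlemma} to bound the number of admissible $(s,h,L)$. The only cosmetic difference is that the paper treats the axis-parallel squares (your $s=0$ case) separately, obtaining $O(n^2)$ for them directly rather than folding them into the Pythagorean count; either way the total is $O(n^2\log n)$.
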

	
	\begin{proof} First we give an upper bound on the rhombuses on the point set $[1,n]\times S$ which are not axis-parallel squares.  Suppose that the vertices of a rhombus $ABCD$ are points of $[1,n]\times S$. 
		The coordinates of the vertices $(a_1,a_2),(b_1,b_2),(c_1,c_2),(d_1,d_2)$ of a rhombus  satisfy $a_2+c_2=b_2+d_2$. As these four coordinates are chosen from a Sidon set, we have $\{a_2,c_2\}= \{b_2,d_2\}$. This means that two bases of the rhombus are horizontal. 
		
		Without loss of generality, suppose that $a_1$ is the smallest first coordinate of the four and $b_1$ is the smallest first coordinate on the other horizontal base. This implies $b_2=c_2$ and $a_2=d_2$. Since the rhombus is not an axis-parallel square, the three points $(a_1,a_2),(a_1,b_2),(b_1,b_2)$ form a non-degenerate right triangle, with legs of positive length $a:=b_1-a_1<n$ and $b:=|a_2-b_2|<n$. 
		The point $A=(a_1,a_2)$ and the pair $(a,b)$ determine the rhombus. Every integer $b$ occurs at most once as a difference of two numbers from the Sidon set. 
		There are $O(n\log n)$ pairs $(a,b)$ such that $a<n$, $b<n$ and $a^2+b^2$ is a square according to  Lemma~\ref{pitlemma}.
		Once $b$ is fixed, there are at most $2n$ possible  positions for $A$.
		Thus, in total, the number of rhombuses in $[1,n]\times S$ which are not axis-parallel squares is $O(n^2\log n)$. \\
		Finally, observe that the number of axis-parallel squares is $O(n^2)$, as there are at most $n$ segments that can be obtained as the projection of the square to the $y$ axis, taking into account that $S$ is  Sidon set. This completes the proof.
	\end{proof}
	
	\begin{proof}[Proof of Theorem \ref{rhombus}] Let $S\subset[1,n]$ be a Sidon set of maximum size. Hence, $|S|=(1+o(1))\sqrt{n}$.
		We use a probabilistic method with a deletion argument to show the existence of a rhombus-free set of $[1,n]^2$ of the required size. 
		
		Let $q \in [0,1]$ be a fixed probability, which will be determined later. We choose a random point set $Q$ from $[1,n]\times S$ by independently picking every point with  probability $q$. We create a set $Q'$ by leaving one point from each rhombus in $Q$. There are $O(n^2\log n)$ rhombuses in $[1,n]\times S$ in view of Proposition~\ref{rhombnm}.
		Taking expected values on the number of points in $Q'$, we obtain
		$$\E(|Q'|)\ge \E(|Q|)-\E(|\{\text{rhombuses in $Q$}\}|)$$
		$$\E(|Q'|)\ge q(1+o(1)){n^{3/2}}-q^4Cn^2\log n,$$ with a suitable positive constant $C$.
		Setting $q=\frac{n^{-1/6}}{2(C\log n)^{1/3}}$, we get
		$$\E(|Q'|)\ge \frac38\frac{n^{\frac43}}{(C\log n)^{\frac13}},$$ which shows the existence of a point set $\cP\subseteq \cG$ of size $\displaystyle f_{rho}(n)= \Omega\left(\frac{n^{4/3}}{(\log n)^{1/3}}\right)$ without rhombuses.
	\end{proof}
	\begin{remark} To put this lower bound into perspective, we remark that the function $f_{rho}(n)$ is closely related to problem of maximum size point sets without isosceles triangles. Indeed, it is easy to see that every triple of the $4$ vertices of a rhombus form an isosceles triangle while the vertex set of  every isosceles triangle can be completed to the vertex set of a rhombus in $\Z\times \Z$. In fact, a bound of $\Omega(n^{1.242})$ for $f_{rho}(n)$ can be deduced via the alteration method together with the state-of-art extremal result of Pach and Tardos, claiming that every $N$-point planar set spans at most 
		$O(N^{2.136})$ isosceles triples \cite{PT}. 
	\end{remark}
	\subsubsection{Generalization using difference sets $B_2^-[g]$}
	
	It is natural to ask whether the lower bound for $f_{rho}(n)$ can be improved by selecting the $y$-coordinates from a denser set than a Sidon set. We investigate this by considering generalized Sidon sets.
	While the standard definition of $B_2[g]$ sets restricts the number of solutions to $a+b=m$, for our geometric purposes regarding rhombuses, we require a constraint on the differences.
	
	\begin{defi}
		A set $S \subseteq \mathbb{Z}$ is called a $B_2^-[g]$ set (or a set with the property that differences are repeated at most $g$ times) if for every integer $d \neq 0$, the equation 
		\[ x - y = d, \quad x, y \in S \]
		has at most $g$ solutions.
	\end{defi}
	
	Note that for $g=1$, this is equivalent to the classical Sidon set definition. For $g > 1$, this class differs from the standard sum-based $B_2[g]$ sets, but their asymptotic behavior regarding maximum density is analogous. 
	It is a known result (Schmutz - Tait \cite{Tait} and Xu \cite{Xu}, see also Cilleruelo, Ruzsa, and Trujillo \cite{Cilleruelo02}) that the maximum size of a $B_2^-[g]$ set in $[1, n]$ is $|S_g| = \Theta(\sqrt{gn})$.
	We construct our point set $\mathcal{P}$ by taking a random subset of the grid $ [1, n] \times S_g$. The size of this base grid is $|[1, n] \times S_g| = \Theta(\sqrt{n^{3}g})$.
	
	We now estimate the number of rhombuses in $[1, n] \times S_g$. A rhombus is determined by its side lengths and the coordinates of its vertices. For a rhombus to exist on the grid, its side lengths must satisfy geometric constraints related to Pythagorean triples.
	
	In our product set construction, a rhombus is formed by choosing:
	\begin{enumerate}
		\item A vertical difference $h$ (height).
		\item Two pairs of $y$-coordinates $\{y_1, y_2\}$ and $\{y_3, y_4\}$ from $S_g$ such that $y_2 - y_1 = y_4 - y_3 = h$. Since $S_g$ is a $B_2^-[g]$ set, there are at most $g$ pairs with difference $h$. Thus, there are at most $\binom{g}{2} = O(g^2)$ ways to choose the vertical positions of the left and right sides of the rhombus.
		\item A valid horizontal shift $s$ and length $w$ such that $s^2 + h^2 = w^2$.
		\item An $x$-coordinate for the bottom-left vertex (approx $n$ choices).
	\end{enumerate}
	
	Summing over all possible heights and valid triples via Lemma \ref{pitlemma}, the number of rhombuses $N_{rho}$ in $[1, n] \times S_g$ is bounded by:
	\begin{equation}
		N_{rho} \le C \cdot n \cdot g^2 \cdot (n \log n) = O(n^2 g^2 \log n).
	\end{equation}
	
	We apply the alteration method. Let $\mathcal{P}'$ be a random subset of $[1, n] \times S_g$ where each point is included with probability $p$. We then remove one point from each rhombus. The expected size of the final set is:
	\begin{equation}
		\mathbb{E}(|\mathcal{P}_{final}|) = p |\mathcal{G}_g| - p^4 N_{rho} = c_1 p n^{3/2}g^{1/2} - c_2 p^4 n^2 g^2 \log n.
	\end{equation}
	To maximize this, we choose $p$ such that the two terms are of the same order:
	\[
	p \approx \left( \frac{n^{3/2}g^{1/2}}{n^2 g^2 \log n} \right)^{1/3} = n^{-1/6} g^{-1/2} (\log n)^{-1/3}.
	\]
	Substituting this $p$ back into the expectation:
	\begin{equation}
		\mathbb{E}(|\mathcal{P}_{final}|) \approx n^{-1/6} g^{-1/2} (\log n)^{-1/3} \cdot n^{3/2} g^{1/2} = \Theta\left( \frac{n^{4/3}}{(\log n)^{1/3}} \right).
	\end{equation}
	\begin{remark}
		The parameter $g$ cancels out completely in the final bound. This implies that increasing the density of the rows by relaxing the Sidon condition to $B_2^-[g]$ is exactly counterbalanced by the quadratic increase in the number of rhombuses. The resulting lower bound $\Omega(n^{4/3}(\log n)^{-1/3})$ is robust and matches the result derived from standard Sidon sets.
	\end{remark}

	\subsection{Kites}

	\begin{defi}
		Let us denote by $f_{ax\_kite}(n)$ the maximum size of a point set $\cP\subseteq [1,n]\times [1,n]$, with the condition that $\cP$ does not contain kites having horizontal and vertical diagonals. 
	\end{defi}
	
	\begin{prop}\label{apk1}
		$f_{ax\_kite}(n)=\Omega\left(\frac{n^2}{e^{O(\sqrt{\log{n}})}}\right)$.
	\end{prop}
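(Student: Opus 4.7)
The plan is to produce $\mathcal{P}$ as a Cartesian product of a Behrend-type $3$-AP-free set with itself. Let $A\subseteq[1,n]$ be a set containing no non-trivial $3$-term arithmetic progression, of size $|A|=n\cdot e^{-O(\sqrt{\log n})}$, as guaranteed by Behrend's classical construction. I would then take
\[
\mathcal{P}:=A\times A\subseteq [1,n]\times[1,n],
\]
which has cardinality $|\mathcal{P}|=|A|^2=n^2\cdot e^{-O(\sqrt{\log n})}$, already matching the bound required by Proposition~\ref{apk1}.

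The only remaining step is to check that $\mathcal{P}$ contains no kite with horizontal and vertical diagonals. The key observation is that such a non-degenerate kite must have vertex set of the form
\[
\bigl\{(a,y_1),\,(a,y_2),\,(a-c,b),\,(a+c,b)\bigr\}
\]
with $y_1\neq y_2$ and $c\neq 0$: the vertical diagonal lies in the column $x=a$, and by definition of a kite this vertical diagonal is the perpendicular bisector of the horizontal one, forcing the two endpoints of the horizontal diagonal to be reflections of each other across $x=a$. If all four vertices lie in $A\times A$, then the three $x$-coordinates $a-c,\,a,\,a+c$ all belong to $A$ and form a non-trivial $3$-term arithmetic progression with common difference $c$, contradicting the choice of $A$.

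There is essentially no hard step in the argument; it is a short geometric-to-arithmetic reduction. The one thing to be careful about is spelling out the shape of an axis-parallel kite and recording the non-degeneracy conditions $y_1\neq y_2$ and $c\neq 0$, so that the forced $3$-AP is genuine. Note that this simultaneously rules out rhombuses with axis-parallel diagonals as a special case, since they fit into exactly the same parametrization, which is consistent with the remarks on square-free sets foreshadowed in the introduction.
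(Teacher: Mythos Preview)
Your approach is exactly the paper's: take a Behrend $3$-AP-free set $A\subseteq[1,n]$ and use $\mathcal{P}=A\times A$, then observe that an axis-parallel kite forces a $3$-AP among the coordinates. One small gap: you assert that ``by definition of a kite this vertical diagonal is the perpendicular bisector of the horizontal one,'' but in a general kite only \emph{one} of the two diagonals is the axis of symmetry, and it could equally well be the horizontal one (vertices $\{(x_1,b),(x_2,b),(a,b-c),(a,b+c)\}$), in which case the $3$-AP appears among the $y$-coordinates instead; since $\mathcal{P}=A\times A$ is symmetric in the two factors, this case is handled identically, but you should state it.
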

	
	In order to prove the bound, we recall the celebrated result of Behrend.
	
	\begin{lemma}[Behrend \cite{Behrend}]\label{Be}
		There exists a 3-term arithmetic progression (3-AP, for brief) free set $S \subseteq \{1, 2, \dots, n\}$ of size $|S|=\Omega\left(\frac{n}{e^{O(\sqrt{\log{n}})}}\right)$.
	\end{lemma}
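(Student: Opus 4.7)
The plan is to follow Behrend's classical construction, which produces a dense $3$-AP-free subset of $[1,n]$ by identifying integers with their digit vectors in a suitable base and restricting to a level set of the squared-norm function; since a straight line meets a Euclidean sphere in at most two points, the arithmetic relation $x+z=2y$ cannot hold among three distinct chosen elements.

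First I would fix parameters $d,k\ge 1$, to be optimised later, with $(2d)^k\le n$, and consider the image of the cube $\{0,1,\dots,d-1\}^k$ under the base-$2d$ expansion,
\[
A := \Bigl\{\,\sum_{i=0}^{k-1} v_i\,(2d)^i \,:\, (v_0,\dots,v_{k-1})\in\{0,1,\dots,d-1\}^k\,\Bigr\}\subseteq [0,n-1],
\]
so $|A|=d^k$ and $x\mapsto \mathbf v(x)=(v_0(x),\dots,v_{k-1}(x))$ is a bijection. To each $x\in A$ I attach the squared norm $\Phi(x):=\sum_i v_i(x)^2$, which takes at most $k(d-1)^2+1\le kd^2$ values. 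By pigeonhole there is a level set $S:=\{x\in A:\Phi(x)=r\}$ of size $|S|\ge d^k/(kd^2)$; translating by $1$ places $S$ inside $[1,n]$ while preserving $3$-AP-freeness.

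Next I would verify that $S$ contains no non-trivial $3$-AP. Suppose $x,y,z\in S$ satisfy $x+z=2y$. Each digit sum $v_i(x)+v_i(z)$ is at most $2(d-1)<2d$, so no carries occur when computing $x+z$ in base $2d$; similarly the digits $2v_i(y)$ of $2y$ are at most $2(d-1)<2d$. Uniqueness of base-$2d$ representations therefore forces $v_i(x)+v_i(z)=2v_i(y)$ for every $i$, i.e.\ $\mathbf v(x),\mathbf v(y),\mathbf v(z)$ are collinear in $\mathbb R^k$ with $\mathbf v(y)$ their midpoint. Since all three vectors lie on the sphere of radius $\sqrt r$, expanding $\bigl|(\mathbf v(x)+\mathbf v(z))/2\bigr|^2 = r$ and applying the equality case of Cauchy--Schwarz yields $\mathbf v(x)=\mathbf v(z)$, hence $x=y=z$.

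Finally I would optimise $d$ and $k$. Choosing $2d=\lfloor n^{1/k}\rfloor$ gives $|A|=d^k\asymp n/2^k$, so $|S|\gtrsim n/(2^k\,k\,n^{2/k})$, whence
\[
\log|S|\ge \log n - k\log 2 - \log k - \tfrac{2\log n}{k}.
\]
Balancing the linear and reciprocal terms in $k$ yields the optimum at $k\asymp\sqrt{\log n}$, where both $2^k$ and $n^{2/k}$ are of the form $e^{O(\sqrt{\log n})}$ and the factor $k$ is absorbed into the same exponential. This delivers $|S|=\Omega\bigl(n/e^{O(\sqrt{\log n})}\bigr)$, as required. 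The only subtle point is the no-carry step, where the choice of base $2d$ rather than $d$ is essential so that both $x+z$ and $2y$ avoid carries simultaneously and the geometric collinearity-on-a-sphere obstruction can be invoked; the remaining pigeonhole count and parameter optimisation are routine.
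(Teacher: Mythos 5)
Your proposal is correct and is precisely Behrend's original construction --- the cited source's argument --- which the paper invokes by reference without reproducing a proof: the base-$2d$ digit embedding with digits in $\{0,\dots,d-1\}$, the pigeonhole onto a sphere $\Phi(x)=r$, the no-carry step forcing $v_i(x)+v_i(z)=2v_i(y)$, the Cauchy--Schwarz equality case ruling out a midpoint on the sphere, and the optimisation $k\asymp\sqrt{\log n}$ are all sound. Since the paper offers no proof of its own, there is nothing to compare beyond noting that your reconstruction faithfully matches the classical argument it cites.
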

	
	\begin{proof}[Proof of Proposition \ref{apk1}] Take a $3$-AP free set of maximum size in $[1,n]$. Consider the set $S\times S\subseteq [1,n]^2$. Kites with axis-parallel diagonals have a diagonal that is an axis of symmetry. A projection in the direction of this diagonal would give rise to a $3$-AP, but vertical or horizontal projections of $S\times S$ does not contain such configurations, by definition.
	\end{proof}

	\begin{prop}\label{apk2}
		$f_{ax\_kite}(n)=o(n^2)$
	\end{prop}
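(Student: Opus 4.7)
I plan to reduce the bound to Roth's theorem on $3$-term arithmetic progression free subsets of $[n]$. The central observation is that kite-avoidance forces each column of $\cP$, restricted to the rows of $\cP$ containing at least three points, to be $3$-AP-free; a Roth-type column bound then yields $|\cP| = o(n^2)$.

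Concretely, let $R_y := \{x : (x,y) \in \cP\}$ and $C_x := \{y : (x,y) \in \cP\}$, and let $D := \{y \in [n] : |R_y| \ge 3\}$ denote the set of heavy rows. Since $\sum_{y \notin D} |R_y| \le 2n$, the task reduces to bounding $\sum_{y \in D} |R_y| = \sum_{x \in [n]} |C_x \cap D|$. The key claim to establish is that $C_{x_0} \cap D$ is $3$-AP-free for every column $x_0 \in [n]$. I would prove it by contradiction: if $a < b < c$ lie in $C_{x_0} \cap D$ with $a + c = 2b$, then $b \in D$ gives $|R_b| \ge 3$, so we can pick two distinct $x_L, x_R \in R_b \setminus \{x_0\}$. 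The four points $(x_0,a), (x_0,c), (x_L,b), (x_R,b)$ are pairwise distinct and form a non-degenerate kite whose horizontal diagonal $y=b$ is the axis of symmetry and whose perpendicularly bisected (vertical) diagonal lies on $x = x_0$, contradicting kite-freeness.

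Granted the claim, Roth's theorem (or a stronger recent bound such as Bloom--Sisask or Kelley--Meka) yields $|C_{x_0} \cap D| \le r(n) = o(n)$ uniformly in $x_0$, where $r(n)$ denotes the maximum size of a $3$-AP-free subset of $[n]$. Summing over columns gives $\sum_{x_0 \in [n]} |C_{x_0} \cap D| \le n\cdot r(n) = o(n^2)$, and combining with the $\le 2n$ contribution of the non-heavy rows completes the proof. I do not anticipate a serious obstacle; the only delicate step is setting the heavy-row threshold at $|R_y| \ge 3$ (rather than $\ge 2$), which is exactly what guarantees that $x_L$ and $x_R$ can be chosen distinct from $x_0$ so that the four-point configuration produced above is a genuine non-degenerate kite rather than a collapsed configuration with three collinear vertices.
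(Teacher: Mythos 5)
Your reduction is sound and the bookkeeping is correct: discarding the rows with at most two points costs only $2n$ points, the identity $\sum_{y\in D}|R_y|=\sum_{x\in[n]}|C_x\cap D|$ legitimately passes the count to columns, and so everything hinges on the claim that $C_{x_0}\cap D$ is $3$-AP-free. The one case you do not address is convexity. From $|R_b|\ge 3$ you can only guarantee two points $x_L,x_R\in R_b\setminus\{x_0\}$, and both may lie on the same side of $x_0$; then the quadrilateral with vertex order $(x_0,a),(x_L,b),(x_0,c),(x_R,b)$ is a \emph{non-convex} kite (a ``dart''): it is simple, has the reflection symmetry across the line $y=b$, and its diagonals are the vertical segment on $x=x_0$ and the horizontal segment in row $b$, but these diagonal segments do not cross. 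Under the inclusive reading of ``kite'' this is still a forbidden configuration and your proof is complete; if only convex kites are meant (the paper's own proof needs only squares with axis-parallel diagonals, which are convex), your argument as written has a gap, though an easily repaired one: call a point of $\cP$ \emph{bad} if it is the leftmost or rightmost point of its row (at most $2n$ bad points in total), and note that if $a<b<c$ is a $3$-AP in $C_{x_0}$ whose middle point $(x_0,b)$ is not bad, then row $b$ contains points strictly on both sides of $x_0$, which together with $(x_0,a),(x_0,c)$ form a \emph{convex} axis-diagonal kite. Hence the non-bad part of every column is $3$-AP-free and your count goes through verbatim, with ``bad'' replacing the heavy-row threshold.

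With that caveat, your route is genuinely different from the paper's. The paper partitions $[1,n]^2$ by the parity of $x+y$, views each class as a $45^\circ$-rotated grid, observes that squares with horizontal and vertical diagonals are particular axis-parallel kites, and invokes the Fürstenberg--Katznelson/Solymosi theorem $f_{ax\_sq}(n)=o(n^2)$, a much deeper input that yields no explicit quantitative saving. Your argument uses only Roth's theorem and gives the effective bound $f_{ax\_kite}(n)\le n\cdot r_3(n)+O(n)$, where $r_3(n)$ denotes the maximum size of a $3$-AP-free subset of $[n]$; with the Kelley--Meka bound this is $O\!\left(n^{2}e^{-c(\log n)^{1/9}}\right)$, far stronger than $o(n^2)$ and of the same $r_3$-flavour as the Behrend-type lower bound of Proposition \ref{apk1}. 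So your approach both avoids the heavy machinery and proves a quantitatively better statement; it is worth writing up carefully, with the convex/non-convex point addressed explicitly.
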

	
	\begin{proof}
		Partition the point set $[1,n]^2$ into two subsets $T_1$ and $T_2$ according to the parity of the sum of their coordinates. That is, a point $(x, y) \in \cP$ belongs to $T_1$ if $x + y$ is even, and to $T_2$ otherwise. Each $T_i$ is part of a square grid in which the unit length is $\sqrt{2}$, moreover, both $T_1$ and $T_2$  are contained in distinct $n\times n$ square grids. Suppose to the contrary that it is possible to select $cn^2$ points from $[1,n]^2$ such that no four of them form an axis-parallel kite, where $c>0$ is a constant.
		
		Then, by the pigeonhole principle, at least $\frac{c}{2}n^2$ of these points lie in either $T_1$ or $T_2$, thus lie in an $n\times n$ square grid. By the assumption, these sets do not contain kites with axis-parallel diagonals.  However,in this case both has to be of order of magnitude $o(n^2)$ due to the Fürstenberg-Katznelson result (Theorem \ref{Solym}) as they can not contain axis-aligned squares either.
	\end{proof}
	
	\subsection{Rectangles, Squares}
	
	In this section we recall some classical results with respect to rectangles and squares.
	
	\begin{defi}
		Let us denote by $f_{ax\_rec}(n)$ the maximum size of a point set $\cP\subseteq [1,n]\times [1,n]$, with the condition that $\cP$ does not contain axis-parallel rectangles. \\
		Let us denote by $f_{ax\_sq}(n)$ the maximum size of a point set $\cP\subseteq [1,n]\times [1,n]$, with the condition that $\cP$ does not contain axis-parallel squares. Likewise, let us denote by $f_{sq}(n)$ the maximum size of a point set $\cP\subseteq [1,n]\times [1,n]$, with the condition that $\cP$ does not contain squares.
	\end{defi}

	The corresponding problem w.r.t. rectangles is well-known, as it was asked in a slightly modified formulation by Zarankiewicz \cite{Zara}. Indeed, this problem is equivalent to determining the maximum number of edges in a balanced bipartite graph with $n$ vertices on each side, under the condition that the graph contains no $4$-cycles (i.e., it is $C_4$-free). 
	
	\begin{theorem}[Reiman \cite{Reiman}] {$$(1+o(1))n^{\frac{3}{2}}\le f_{ax\_rec}(n)\le \frac{n}{2}\bigl(1+\sqrt{4n-3}\bigr).$$
			The upper bound is sharp when there is a $q$, such that $n=q^2+q+1$ and there exists a finite projective plane of order $q$.
		}
	\end{theorem}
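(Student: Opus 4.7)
The plan is to encode a grid subset $\cP\subseteq[1,n]\times[1,n]$ as a bipartite graph $G$: one vertex class is the set of $n$ rows, the other is the set of $n$ columns, and a point $(i,j)\in\cP$ becomes the edge $r_iс_j$. The condition of avoiding axis-parallel rectangles then translates exactly to the statement that $G$ has no $C_4$, i.e., no two row-vertices share two common column-neighbours. Hence $f_{ax\_rec}(n)=z(n,n;2,2)$, the Zarankiewicz number, and we are free to use standard bipartite extremal tools.

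For the upper bound, I would count cherries centred at column vertices. Since no two rows have two common columns, $\sum_{c}\binom{d(c)}{2}\le \binom{n}{2}$, where $d(c)$ ranges over the $n$ column degrees. Let $e=|\cP|=\sum_c d(c)$. By convexity (Jensen's inequality applied to $x\mapsto\binom{x}{2}$),
\[
n\binom{e/n}{2}\le \sum_c \binom{d(c)}{2}\le \binom{n}{2},
\]
which rearranges to $e^2-ne\le n^2(n-1)$. Solving this quadratic in $e$ yields $e\le \frac{n}{2}\bigl(1+\sqrt{4n-3}\bigr)$, the claimed bound. I do not expect any difficulty here.

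For the lower bound and the sharpness, the plan is to use the incidence structure of a projective plane. When $n=q^2+q+1$ and a projective plane $\Pi_q$ of order $q$ exists, take its points as rows, its lines as columns, and put a point of $\cP$ at position $(P,\ell)$ iff $P\in\ell$. Each line has $q+1$ points and each point lies on $q+1$ lines, giving $|\cP|=(q+1)(q^2+q+1)=\frac{n(1+\sqrt{4n-3})}{2}$ after checking the arithmetic. The axiom that two distinct points lie on a unique common line exactly means that no two row-vertices share two columns, i.e., no axis-parallel rectangle; this is the equality case (all column degrees equal and every pair of rows sharing exactly one column — the two conditions that make the Jensen chain tight). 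For general $n$, I would choose the largest prime power $q$ with $q^2+q+1\le n$, embed the corresponding plane into an $n\times n$ subgrid, and use Bertrand-type density of prime powers to conclude $|\cP|\ge (1+o(1))n^{3/2}$.

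The main potential obstacle is the asymptotic in the lower bound: one needs the gaps between usable prime powers $q$ to be small enough that $q^2+q+1\le n$ can be achieved with $q=(1-o(1))\sqrt{n}$. This follows from classical results on gaps between primes (even Bertrand's postulate suffices to get $\Omega(n^{3/2})$, while stronger prime-gap results give the sharp $(1+o(1))$ constant), so this is routine but should be explicitly invoked. No new ideas beyond the double-counting argument and the projective plane construction are required.
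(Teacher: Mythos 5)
Your proposal is correct and is exactly the classical argument behind the cited result: the paper itself only states Reiman's theorem with a reference and notes the equivalence with the $C_4$-free Zarankiewicz problem, and your double counting of cherries plus the projective-plane incidence construction (with prime gaps of size $o(p)$ giving the $(1+o(1))$ constant, as you correctly note Bertrand alone would not) is the standard proof of both bounds and of the sharpness claim.
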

	\noindent For further exact results on Zarankiewicz numbers $Z(n,n,2,2)$, see \cite{Furedi, FS, KST}.\\
	
	Now we consider the case when the forbidden configurations are squares.
	
	\begin{theorem}[Fürstenberg-Katznelson \cite{FK}, see also Solymosi's work \cite{Solymosi}]\label{Solym}
		
		$$f_{ax\_sq}(n)=o(n^2).$$
	\end{theorem}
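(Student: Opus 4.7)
The plan is to deduce Theorem~\ref{Solym} from the multidimensional Szemerédi theorem of Fürstenberg and Katznelson \cite{FK}, which asserts that for every finite pattern $F\subseteq\mathbb{Z}^k$ and every $\delta>0$ there exists $N_0=N_0(F,\delta)$ such that every subset $A\subseteq[N]^k$ with $|A|\ge\delta N^k$ contains a homothetic copy $v+r\cdot F:=\{v+rx:x\in F\}$ for some $v\in\mathbb{Z}^k$ and some integer $r\ge 1$.

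The first step is to identify the right pattern: take $k=2$ and
\[
F_0:=\{(0,0),(1,0),(0,1),(1,1)\},
\]
the vertex set of the unit axis-parallel square. A dilate $v+r\cdot F_0$ with $r\ge 1$ is precisely the vertex set of an axis-parallel square of integer side length $r$ with bottom-left corner $v\in\mathbb{Z}^2$, so the theorem specialized to $F_0$ says that for every $\delta>0$ and all $n\ge N_0(F_0,\delta)$, any $A\subseteq[n]^2$ with $|A|\ge\delta n^2$ contains an axis-parallel square. Taking the contrapositive, any axis-parallel-square-free $A\subseteq[n]^2$ must satisfy $|A|<\delta n^2$ for all sufficiently large $n$; since $\delta>0$ was arbitrary, this yields $f_{ax\_sq}(n)=o(n^2)$.

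The main obstacle is of course packaged inside the Fürstenberg--Katznelson theorem itself, which has no elementary proof: the original argument uses ergodic theory, and the known combinatorial routes proceed through the hypergraph removal lemma of Gowers and of Nagle--Rödl--Schacht--Skokan, as in Solymosi's combinatorial derivation \cite{Solymosi} (which in spirit generalizes the triangle-removal proof of the corners theorem by adding a fourth vertex class indexing the anti-diagonal line family $x-y=t$, so that axis-parallel squares become the removable configurations). Either route yields only a qualitative $o(n^2)$ bound; extracting an effective rate of decay for $f_{ax\_sq}(n)/n^2$ remains a well-known open problem.
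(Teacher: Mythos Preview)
The paper does not supply its own proof of this theorem; it merely cites \cite{FK} and \cite{Solymosi} and remarks that F\"urstenberg and Katznelson ``actually proved a much more general theorem via ergodic theory, while Solymosi's proof is combinatorial.'' Your proposal correctly carries out exactly the specialization the paper alludes to---applying the F\"urstenberg--Katznelson multidimensional Szemer\'edi theorem to the pattern $F_0=\{(0,0),(1,0),(0,1),(1,1)\}$---so it is both correct and in line with the paper's (implicit) approach.
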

	\noindent Fürstenberg and Katznelson actually proved a much more
	general theorem via ergodic theory, while Solymosi's proof is combinatorial. The problem was raised by Erdős and Graham \cite{EG}, see also \cite{ASz}.

	\begin{prop}\label{axsq}
		$$f_{ax\_sq}(n)=\Omega\left(\frac{n^2}{e^{O(\sqrt{\log{n}})}}\right).$$    
	\end{prop}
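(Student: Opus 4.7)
The plan is to construct $\cP$ as a union of anti-diagonals of the grid $[1,n]^2$ indexed by a $3$-AP-free set of integers obtained from Behrend's Lemma~\ref{Be}. Concretely, I would first apply Lemma~\ref{Be} to obtain a $3$-AP-free set $S\subseteq [1,\lfloor n/2\rfloor]$ of size $\Omega(n/e^{O(\sqrt{\log n})})$, and translate it by $n$ to obtain $S':=n+S\subseteq [n+1,\lfloor 3n/2\rfloor]$ (translation preserves the $3$-AP-free property). Then I would set
\[
\cP:=\{(x,y)\in[1,n]^2 : x+y\in S'\}.
\]

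The key identity is that the four corners $(a,b),\ (a+d,b),\ (a,b+d),\ (a+d,b+d)$ of an axis-parallel square of side $d>0$ have coordinate sums $a+b$, $a+b+d$, $a+b+d$, and $a+b+2d$. Hence, if all four corners belonged to $\cP$, the triple $a+b,\ a+b+d,\ a+b+2d$ would form a $3$-term arithmetic progression contained in $S'$, contradicting the choice of $S$. Thus $\cP$ is free of axis-parallel squares.

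For the size estimate, each value $s\in S'$ contributes $2n-s+1$ points to $\cP$, which is at least $n/2$ because $s\le \lfloor 3n/2\rfloor$. Summing over the $\Omega(n/e^{O(\sqrt{\log n})})$ values $s\in S'$ yields $|\cP|=\Omega(n^2/e^{O(\sqrt{\log n})})$, matching the claimed lower bound. There is no substantive obstacle in this argument: the only care required is positioning the Behrend set inside a sub-interval on which every anti-diagonal of the grid carries $\Omega(n)$ lattice points, ensuring the quadratic rather than merely linear contribution; the step turning $3$-AP-freeness of $S$ into absence of axis-parallel squares in $\cP$ is immediate from $(a+b)+(a+b+2d)=2(a+b+d)$.
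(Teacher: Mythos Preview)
Your proof is correct. Both the paper's argument and yours rest on Behrend's $3$-AP-free set together with the observation that the coordinate sums of the four corners of an axis-parallel square form a $3$-term arithmetic progression, but the implementations differ. The paper passes through a $45^\circ$ rotation: it sits inside the even-parity sublattice a $\lfloor n/2\rfloor\times\lfloor n/2\rfloor$ grid $\cG$ whose axes are diagonal to $[n]^2$, takes $S\times S$ in $\cG$ for a Behrend set $S$, and then invokes Proposition~\ref{apk1} (the axis-parallel-kite-free construction) to deduce that no axis-parallel square of $[n]^2$ survives. Your route is more direct and avoids both the rotation and the detour through kites: you simply select whole anti-diagonals of $[1,n]^2$ indexed by a translated Behrend set and read off the $3$-AP contradiction from the coordinate sums $a+b,\ a+b+d,\ a+b+2d$. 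Your set has size $\Theta(n\cdot|S|)$ rather than the paper's $\Theta(|S|^2)$, so it is in fact larger by a factor of roughly $e^{O(\sqrt{\log n})}$, though this gain is absorbed by the big-$\Omega$ notation in the statement.
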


	\begin{proof} Take the  grid points of $[n]^2$ in which the sum of the coordinates is even. The resulting  set of points contains 
		a $\lfloor{\frac n2\rfloor}\times \lfloor{\frac n2\rfloor}$ square grid $\cG$, which has axis-parallel diagonals w.r.t. the original grid $[n]^2$.  
		We apply the construction presented in Proposition \ref{apk1}, and define a $3$-AP-free set $S$ within $\lfloor{\frac n2\rfloor}$ to pick the points of $S\times S$ in $\cG$.  $S\times S$ is an
		axis-parallel kite-free construction hence it gives us an axis-parallel square-free construction in $\cG$, thus it does not contain  squares having diagonals parallel to the sides of $\cG$. Crucially, this means that it does not contain  squares having horizontal and vertical sides of the original grid $[n]^2$.
		The size of the construction $S\times S$ implies the claimed bound.
	\end{proof}

	Using Singer's construction for maximal Sidon sets, we present a lower bound for general square-free sets as well.
	
	\begin{lemma}[Singer  \cite{Singer-Sidon}, see also \cite{ Erdos-Sidon}]\label{Singersidon} There exists a perfect difference set in $\Z_m$ for $m=q^2+q+1$ if $q$ is a prime power. Its cyclic translates are also perfect difference sets in the group. As a consequence,
		there exist a Sidon sets $S \subseteq \{1, 2, \dots, n\}$ of size $|S|=\sqrt{n}(1-o(1))$.
	\end{lemma}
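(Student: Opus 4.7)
The plan is to proceed via the classical \emph{Singer cycle} construction. I would identify the $m = q^2 + q + 1$ points of the projective plane $PG(2,q)$ with the cosets $\F_{q^3}^*/\F_q^*$ (viewing $\F_{q^3}$ as a $3$-dimensional $\F_q$-vector space). Fixing a generator $g$ of the cyclic group $\F_{q^3}^*$, multiplication by $g$ is $\F_q$-linear and descends to a permutation of order $m$ on the point set which acts simply transitively; by linearity it also permutes the lines, so it is a collineation. After labelling the points $0, 1, \dots, m-1$ along the orbit of $g$, I would take $D \subseteq \Z_m$ to be the set of labels of the points on one fixed line $L$.

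Next I would record that $|D| = q+1$, which gives exactly $q(q+1) = m-1$ ordered nonzero differences in $\Z_m$. To conclude that $D$ is a perfect difference set, it therefore suffices to show that each nonzero $a \in \Z_m$ arises \emph{at most once} as a difference $d - d'$ with $d, d' \in D$. If some $a \neq 0$ arose twice, then the collineation $g^a$ would send two distinct points of $L$ back into $L$, and since a collineation that fixes two points of a line fixes that line setwise, we would obtain $g^a(L) = L$.

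The crucial, and technically most delicate, step is ruling this out for every nonzero $a$. Under $\langle g^a \rangle$ the full point set decomposes into orbits of common size $m/\gcd(a,m)$; if $L$ were a union of such orbits, this size would have to divide $|L| = q+1$. But
\[
\gcd(q^2+q+1,\ q+1) \;=\; \gcd\bigl(q^2+q+1 - q(q+1),\ q+1\bigr) \;=\; 1,
\]
forcing the orbit size to be $1$ and hence $a \equiv 0 \pmod m$, a contradiction. The statement on cyclic translates is then immediate, since $(d+t)-(d'+t) = d-d'$ preserves all pairwise differences.

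Finally, to extract a Sidon set in $[1,n]$, I would choose a prime power $q$ with $q^2+q+1 \le n$ and $q = (1-o(1))\sqrt{n}$, which is possible by the prime number theorem (Bertrand's postulate applied to ordinary primes already suffices). Viewing $D$ as a subset of $\{0, 1, \dots, m-1\} \subseteq [0,n]$, any identity $d_1 - d_2 = d_3 - d_4$ in $\Z$ holds a fortiori modulo $m$, and the perfect difference property forces $\{d_1,d_2\} = \{d_3,d_4\}$. Hence $D$ is a Sidon set of size $(1+o(1))\sqrt{n}$ inside $[1,n]$, as required.
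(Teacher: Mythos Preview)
The paper does not prove this lemma at all: it is stated with attribution to Singer and Erd\H{o}s and then used as a black box in the proof of Theorem~\ref{sq}. So there is no ``paper's proof'' to compare against; your proposal supplies a full argument where the paper is content to cite.

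Your argument is the standard Singer-cycle proof and is essentially correct. One phrasing is slightly off: you write ``a collineation that fixes two points of a line fixes that line setwise,'' but $g^a$ need not \emph{fix} any point of $L$; what you actually use is that $g^a(L)$ is a line sharing the two distinct points $d_1,d_2$ with $L$, hence $g^a(L)=L$. The rest---the orbit-size argument via $\gcd(q^2+q+1,q+1)=1$, the invariance of difference multisets under translation, and the lift to $[1,n]$ using a prime $q=(1-o(1))\sqrt{n}$---is clean and correct. You might also note explicitly that $d_1\neq d_2$ (which follows since $(d_1,d_1')\neq(d_2,d_2')$ and $d_1-d_1'=d_2-d_2'$), and shift $D\subseteq\{0,\dots,m-1\}$ by one to land in $\{1,\dots,n\}$, but these are cosmetic.
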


	\begin{theorem}\label{sq}
		$$f_{sq}(n)=\Omega\left(\frac{n^{3/2}}{e^{O(\sqrt{\log{n}})}}\right).$$
	\end{theorem}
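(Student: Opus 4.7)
The plan is to combine a Singer Sidon set (acting on the $y$-coordinate) with a Behrend 3-AP-free set (acting on the coordinate sum $x+y$): the Sidon layer forces every square in our set to be axis-parallel, and the 3-AP-free layer then forbids those axis-parallel squares.

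Fix a prime power $N=q^2+q+1\leq n$ with $N=(1-o(1))n$, take $S\subseteq\Z_N$ to be a Singer Sidon set with $|S|=(1-o(1))\sqrt{N}$ by Lemma~\ref{Singersidon}, and take $T\subseteq[1,N/3]$ to be a Behrend 3-AP-free set with $|T|=N/e^{O(\sqrt{\log N})}$ by Lemma~\ref{Be}. Because $T$ lies in an interval of length less than $N/2$, its 3-AP-freeness as a subset of $\Z$ lifts automatically to $\Z_N$. Embedding $\Z_N\subseteq\{0,1,\dots,N-1\}\subseteq[1,n]$, I define
\[\cP:=\bigl\{(x,y)\in\Z_N\times S : x+y\in T \pmod N\bigr\}.\]
For each fixed $y\in S$ exactly $|T|$ values of $x\in\Z_N$ satisfy $x+y\in T\pmod N$, giving $|\cP|=|S|\cdot|T|=\Omega\bigl(n^{3/2}/e^{O(\sqrt{\log n})}\bigr)$, which is the claimed size.

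To verify that $\cP$ is square-free in the integer grid, I consider any non-degenerate square with cyclically labelled vertices $v_1,v_2,v_3,v_4$. The diagonal-midpoint identity $v_1+v_3=v_2+v_4$ yields $y_1+y_3=y_2+y_4$ in $\Z$, hence also modulo $N$; the Sidon property of $S$ in $\Z_N$ then forces $\{y_1,y_3\}=\{y_2,y_4\}$ as multisets, and since the $y_i$ all lie in $\{0,\dots,N-1\}$ this equality holds also in $\Z$. The sub-case $y_1=y_3$ collapses all four $y_i$ and degenerates the square, so two adjacent vertices must share a $y$-coordinate, i.e.\ the square is axis-parallel. But any axis-parallel square $(a,b),(a+d,b),(a+d,b+d),(a,b+d)$ contained in $\cP$ forces the three sums $a+b,\,a+b+d,\,a+b+2d$ into $T\pmod N$ with $1\leq d\leq N-1$, a non-trivial 3-AP in $\Z_N$, contradicting the 3-AP-freeness of $T$.

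The main obstacle is the bookkeeping between $\Z$ and $\Z_N$: the Sidon property I actually invoke is the cyclic one, while the parallelogram identity from a square gives an integer equation, and I must also rule out wrap-around 3-APs in $T$. Both concerns are handled by the observations that integer equalities always descend to congruences, and that confining $T$ to a sub-interval of length $N/3$ eliminates cyclic 3-APs not already coming from integer 3-APs. With these checks in place, the combined Sidon-plus-Behrend construction cleanly delivers the claimed lower bound $f_{sq}(n)=\Omega(n^{3/2}/e^{O(\sqrt{\log n})})$.
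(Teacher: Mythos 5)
Your proof is correct, and while it uses the same two ingredients as the paper --- a Singer perfect difference set and a Behrend $3$-AP-free set, with the same division of labour (the Sidon condition eliminates tilted squares, the Behrend condition eliminates axis-parallel ones) --- the construction itself is genuinely different. The paper starts from the axis-parallel-square-free set of Proposition \ref{axsq} (a Behrend product placed on the rotated even-coordinate-sum sublattice) and then intersects it with the columns over a Sidon set of $x$-coordinates, using an averaging argument over the cyclic translates of the Singer difference set to retain a $\frac{q+1}{q^2+q+1}$-fraction of the points; tilted squares are excluded by the Sidon property of the $x$-coordinates, and axis-parallel squares because the base set contains none. You instead impose the Sidon condition on the $y$-coordinates and the Behrend condition directly on the diagonal sums $x+y \pmod N$, which lets you compute $|\cP|=|S|\cdot|T|$ exactly with no averaging step, and excludes axis-parallel squares because their corner sums $s,\,s+d,\,s+d,\,s+2d$ would give a nontrivial $3$-AP in $T$ modulo $N$, legitimately reduced to an integer $3$-AP since $T\subseteq[1,N/3]$. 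Your handling of the Sidon step via the parallelogram identity $v_1+v_3=v_2+v_4$, including the degenerate subcase $y_1=y_3$, matches the paper's argument in substance, so the net effect is a somewhat cleaner, more self-contained proof of the same bound. Two cosmetic points to tidy: it is $q$, not $N=q^2+q+1$, that should be a prime power, and you should add a word on why such a $q$ with $q^2+q+1=(1-o(1))n$ (or merely $\geq n/4$, which already suffices for the $\Omega$-bound) exists.
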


	\begin{proof}
		We construct a point set that is both axis-parallel and non-axis-parallel square-free. Consider a point set $\cP_1\subset [m]^2$ of maximal size which does not contain axis-parallel squares, where $m$ is of form $q^2+q+1$ for some prime power $q$. Such a set was constructed in the proof of Proposition \ref{axsq} with size $|\cP_1|=\Omega\left(\frac{n^2}{e^{O(\sqrt{\log{n}})}}\right).$ Now take a
		perfect difference set in a cyclic group $\Z_m$ for $m=q^2+q+1$, where $\frac{n}{2}<m\le n$. It can be embedded to $[1,m]\subseteq [1,n]$ so that the resulting set is a Sidon set. The same holds for its translates. By averaging, for one of the translates $S$ of the perfect difference set, $S\times [1,n]$  intersects $\cP_1$ in at least $\frac{q+1}{q^2+q+1}|\cP_1|$ points. The resulting point set $\cP_2:=S\cap \cP_1$ has size $\Omega\left(\frac{n^{3/2}}{e^{O(\sqrt{\log{n}})}}\right)$. It also does not contain squares, because not axis-aligned squares would have vertices having distinct $x$-coordinates $x_1, x_2, x_3, x_4$ such that  $x_2-x_1=x_3-x_4$ or vertices having 3 distinct $x$-coordinates $x_1, x_2, x_3, x_4$ such that  $x_2=x_4=(x_1+x_3)/2$. In either case,  we would get a contradiction to the Sidon property of $S$.
	\end{proof}

	We conjecture that this lower bound might be improved significantly. As for the upper bound, we know that the order of magnitude is subquadratic.
	
	\begin{theorem}[Prendiville \cite{Pren}, see also Shkredov-Solymosi, \cite{SS}]\label{sq_up}
		$$f_{sq}(n)=O\left(\frac{n^{2}}{(\log\log{n})^c}\right),$$ where $c>0$ is an absolute constant.
	\end{theorem}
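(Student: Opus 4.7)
My plan is to follow the density-increment strategy pioneered by Ajtai--Szemerédi for corners and refined by Shkredov and Prendiville. The key observation is that a square may be parametrised as $\{A,\, A+v,\, A+v+v^\perp,\, A+v^\perp\}$ with $v=(s,t)\in \Z^2 \setminus \{(0,0)\}$ and $v^\perp = (-t,s)$; finding a square in $\cP$ amounts to finding a non-trivial solution to this four-vertex system inside $\cP$. So the goal becomes: any $\cP \subseteq [n]^2$ of density $\delta \gg (\log\log n)^{-c}$ must contain a non-trivial such $(A, v)$.

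First I would embed $[n]^2$ into $\Z_N^2$ for a prime $N$ much larger than $n$ to avoid wrap-around issues, write $f = 1_\cP - \delta \cdot 1_{[n]^2}$ for $\delta = |\cP|/n^2$, and study the counting operator
$$T(g) \; := \; \sum_{A,\,v \in \Z_N^2} g(A)\,g(A+v)\,g(A+v+v^\perp)\,g(A+v^\perp).$$
Expanding $T(1_\cP)$, the main term is the random-model count, namely $\delta^4$ times the number of (labelled) squares in $\Z_N^2$, while the cross terms involve $f$. A generalized-von-Neumann inequality, analogous to the ones used by Shkredov \cite{Sh} for corners and by Prendiville \cite{Pren} for isosceles right triangles in general position, should bound the cross terms by a Gowers-type $U^2$-norm of $f$ along lines of varying slopes (the slopes coming from the rotation $v \mapsto v^\perp$).

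If this pseudorandomness norm is small, $T(1_\cP)$ is close to the random-model value and we find a non-degenerate square, since the degenerate contribution from $v = 0$ is only of size $\delta N^2$ and is easily dominated. If the norm is large, standard Fourier-analytic extraction produces a character along which $\hat f$ is concentrated, yielding a Bohr set $B$ on which $\cP$ has density at least $\delta + c\delta^C$. Passing to a suitable sub-grid inside $B$, rescaling, and iterating until the density exceeds $1$ produces the claimed $O(n^2/(\log\log n)^c)$ bound, provided one controls the Bohr-set dimensions and the per-step losses carefully, paralleling the book-keeping in Shkredov's treatment of corners.

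The main obstacle, I expect, is setting up the right pseudorandomness norm and the corresponding generalized-von-Neumann estimate. Unlike corners, the square configuration couples the two coordinate directions through the rotation $v \mapsto v^\perp$, which is a quadratic constraint rather than the additive one controlling axis-parallel corners. I would try either passing to $\Z[i]$ (where multiplication by $i$ realises the rotation, so the configuration becomes a two-term Gaussian progression together with its ``imaginary'' translate) or averaging over sufficiently many rational rotation angles $(s,t)$ and applying Shkredov's corners theorem inside each rotated sub-grid. Preserving the double-logarithmic quantitative loss through this averaging---rather than degrading it to a mere $o(1)$---is the delicate place where the technical heart of the proof lives.
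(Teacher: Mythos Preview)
The paper does not give its own proof of this theorem: it is stated as a citation to Prendiville's result on matrix progressions and to Shkredov--Solymosi, with no argument supplied. So there is nothing in the paper to compare your attempt against; the authors simply invoke the result from the literature.

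As for your sketch itself, it is a reasonable high-level outline of the density-increment machinery that underlies Prendiville's theorem, but it is not a proof. You correctly identify the crux --- that the rotation $v\mapsto v^\perp$ makes the square configuration genuinely harder than corners, because the relevant von Neumann inequality must control a norm that sees this non-commutative/multiplicative structure --- and then you stop, offering two speculative routes (work in $\Z[i]$, or average over rational rotations and invoke Shkredov's corners bound). Neither route is carried out, and the second one in particular is not obviously viable: applying the corners result slope-by-slope only finds a tilted corner, not a full square, and promoting a three-point hit to a four-point hit is exactly the part that requires new work. Prendiville's actual argument handles configurations $\{A, A+M_1v,\ldots,A+M_kv\}$ for commuting matrices $M_i$ simultaneously via a tailored $U^2$-type norm and a density increment on generalised Bohr sets; the square is the case $\{0,I,R,I+R\}$ with $R$ the quarter-turn, and the commutativity of $I$ and $R$ is what makes the linearisation go through. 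Your write-up would need to supply that norm, prove the generalised von Neumann bound, and track the iteration losses before it could be called a proof.
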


	\footnotesize
	
\end{document}